\UseRawInputEncoding
\documentclass[11pt,reqno]{amsart}

\usepackage{amsmath,amsfonts,amsthm,amssymb,amsxtra}
\usepackage{hyperref}
\usepackage{color}
\usepackage[shortlabels]{enumitem}
\usepackage{bbm} 
\usepackage{stmaryrd}
\usepackage{nicematrix}
\usepackage{mathrsfs} 
\usepackage{float}
\usepackage{graphicx}

\usepackage[margin=1.1in]{geometry}
\usepackage{extarrows}

\usepackage{subcaption}

\usepackage{tikz}
\usepackage{pgfplots}
\pgfplotsset{compat=1.18} 
\usetikzlibrary{calc}
\newtheorem{theorem}{Theorem}[section]
\newtheorem{proposition}[theorem]{Proposition}
\newtheorem{lemma}[theorem]{Lemma}
\newtheorem{corollary}[theorem]{Corollary}
\theoremstyle{definition}

\theoremstyle{remark}
\newtheorem{remark}[theorem]{Remark}
\numberwithin{equation}{section}
\newcommand{\C}{\mathbb C}
\newcommand{\R}{\mathbb R}

\newcommand{\dd}{\,d}
\newcommand{\eps}{\varepsilon}
\newcommand{\scm}{\mu_{\mathrm{sc}}}

\DeclareMathOperator{\supp}{supp}
\DeclareMathOperator{\capc}{cap}
\DeclareMathOperator{\Lip}{Lip}

\DeclareMathOperator{\Int}{int}
\setlist[enumerate]{label=\textup{(\roman*)},leftmargin=*,itemsep=3pt,topsep=5pt}
\setlist[itemize]{leftmargin=*,itemsep=3pt,topsep=5pt}
\allowdisplaybreaks[1]
\title[Reflectionless measures on Widom sets]{Reflectionless measures with singular continuous components on Widom sets}
\author[L. Li]{Long Li}

\address{L. Li: Texas A\&M University,
College Station, TX 77843, USA}
\email{\href{mailto:longli@tamu.edu}{longli@tamu.edu}}

\begin{document}
\begin{abstract}
We give a detailed implementation of the mass-splitting construction of
Nazarov, Volberg and Yuditskii for reflectionless measures with
singular continuous components. The finite approximants are allowed to
have isolated atoms. An exact local operation replaces each active atom
by a short permanent interval and two child atoms, thereby eliminating
the dependence of a future splitting scale on an already prescribed
band width. Summable estimates for completed generations and fixed
cylinders retain positive mass on a null Cantor set and exclude atoms
in the limit. Uniform analytic estimates near the permanent intervals
establish reflectionlessness and exclude endpoint singularities.
A separate Green function estimate for thin pairs of intervals gives the
Widom condition and regularity of the limiting compact set. The resulting
probability measure has both a nonzero absolutely continuous component and
a nonzero singular continuous component.
\end{abstract}
\maketitle

\section{Introduction and main result}\label{sec:introduction}

For a finite positive Radon measure $\nu$ supported on a compact subset $\mathsf{E}$ of the real line, define its {\it Cauchy transform}
\begin{equation}\label{eq:cauchy}
 \mathcal{C}_\nu(z)=\int_{\R}\frac{\dd\nu(t)}{t-z},\qquad z\in\C\setminus \mathsf{E}.
\end{equation}
We call the measure $\nu$ 
\emph{reflectionless} on $\mathsf{E}$\footnote{In \cite{NVY2007}, this definition was called weakly reflectionless. The almost every statement with respect to the measure $\nu$ itself leads to their definition of reflectionless.} when the finite boundary value
$\mathcal{C}_\nu(x+i0)$ exists and is purely imaginary for Lebesgue-almost every
$x\in \mathsf{E}$.

For a nonpolar compact set $\mathsf{E}\subset\R$, let $G_\mathsf{E}(z)=G_{\C\setminus \mathsf{E}}(z,\infty)$ be the Green function of the Denjoy domain $\Omega=\C\setminus \mathsf{E}$ with a pole at infinity.
When $\mathsf{E}$ is Dirichlet regular, we extend $G_\mathsf{E}$ continuously by zero to $\mathsf{E}$.
We call $\Omega$ a \emph{Widom domain} \cite{Widom1971Acta} if
\begin{equation}\label{eq:widom-definition}
 \sum_{I\in\mathcal G(\mathsf{E})}\max_{x\in I}G_\mathsf{E}(x)<\infty,
\end{equation}
where $\mathcal G(\mathsf{E})$ is the family of bounded complementary intervals
of $\mathsf{E}$. For subsets of the real line, this is the  sum of critical values of the
Green function.

\begin{theorem}\label{thm:main}
There exist a compact Dirichlet-regular set $\mathsf{E}\subset[-1,1]$, a compact
Lebesgue-null Cantor set $K\subset \mathsf{E}$, and a positive probability measure
$\mu$ with the following properties.
\begin{enumerate}
\item The set $\mathsf{E}$ is the union of $K$ and countably many nondegenerate
closed intervals, and $\supp\mu=\mathsf{E}$.
\item The domain $\C\setminus \mathsf{E}$ is Widom.
\item The measure $\mu$ is reflectionless on $\mathsf{E}$.
\item The measure $\mu$ is atomless, and its Lebesgue decomposition is
\begin{align}\label{eq:decomposition}
 \mu=w(x)\dd x+\scm,\qquad \scm=\mu|_K,\qquad \scm(K)>0.
\end{align}
The absolutely continuous part is also nonzero.
\end{enumerate}
Indeed, the construction below can be realized so that
$\mu(K)\ge \tfrac12 e^{-1/32}$.
\end{theorem}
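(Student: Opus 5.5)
The plan is to build $\mu$ as a weak limit of reflectionless measures $\mu_n$ on finite gap sets $\mathsf{E}_n$, following the mass-splitting idea of Nazarov--Volberg--Yuditskii. The approximants may also carry isolated atoms. The basic object is a finite union of intervals together with finitely many atoms, equipped with a measure whose Cauchy transform has purely imaginary boundary values a.e.\ on the intervals. Concretely, $\mathcal{C}_{\mu_n}$ should be an exponential Herglotz representation of the form
\[
-\frac{1}{\sqrt{(z-a)(z-b)}}\prod_j\frac{z-c_j}{\sqrt{(z-a_j)(z-b_j)}}
\]
with interlacing zeros. Equivalently, the argument of $\mathcal{C}_{\mu_n}(x+i0)$ equals $\pi/2$ on the intervals, and $0$ or $\pi$ in the gaps.

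\textbf{Step 1 (local splitting).} Each active atom $m\delta_{x_0}$ is replaced exactly by
\begin{itemize}
\item a short permanent interval $[x_0-\eta,x_0+\eta]$ carrying absolutely continuous mass, and
\item two child atoms at $x_0\pm\ell$, each of mass close to $m/2$.
\end{itemize}
This is done by editing the argument function locally: insert an interval where the argument is $\pi/2$ and two spikes. The edit is arranged so that $\mathcal{C}$ is unchanged at infinity (total mass $1$) and is perturbed only by $O(\eta+\ell)$ away from a neighborhood of $x_0$. The band width $\eta$ and the splitting scale $\ell$ are chosen together at the same step. This decoupling is the point of the construction: a later splitting scale never has to be compared with an already prescribed band width.

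\textbf{Step 2 (mass bookkeeping).} Let $\epsilon_k$ be the mass diverted to the ac bands at generation $k$, and choose the parameters so that $\sum_k\epsilon_k\le 1/32$. The total mass retained in atoms after generation $n$ is then at least $\tfrac12\prod_k(1-\epsilon_k)\ge\tfrac12 e^{-1/32}$; the initial factor $\tfrac12$ accounts for the mass placed in the first interval. Each cylinder's mass is roughly halved per generation, so individual atom masses tend to $0$. The atoms' positions converge to a Cantor set $K$. Since the total length of the children's $\ell$-neighborhoods is summable and shrinks geometrically, $K$ has Lebesgue measure zero. Weak-$*$ limits then give $\mu$ with the following properties:
\begin{itemize}
\item $\mu(K)\ge\tfrac12 e^{-1/32}$;
\item $\mu$ has no atoms, because cylinder masses tend to $0$ uniformly;
\item $\mu$ restricted to the permanent intervals is ac with density given by the limits of the interval densities.
\end{itemize}
Since $|K|=0$, this yields the decomposition \eqref{eq:decomposition}, and the ac part is nonzero.

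\textbf{Step 3 (reflectionlessness).} On each fixed permanent interval $I$, all later modifications occur at distance $\gtrsim\ell$ from $I$ and have summable size. Hence $\mathcal{C}_{\mu_n}\to\mathcal{C}_\mu$ uniformly on compact subsets of a neighborhood of $\Int I$ in the upper half-plane, including up to the boundary. The purely imaginary boundary values therefore pass to the limit. I expect this to be the main obstacle. Uniform control near the endpoints of $I$ is needed to exclude singularities there, i.e., to keep the limiting density integrable with no extra point masses. It also has to be shown that a.e.\ on $\mathsf{E}$ means a.e.\ on the union of the intervals, since $|K|=0$. To handle this I would write $\mathcal{C}_{\mu_n}$ via its argument function: the argument changes only on sets of summable total length, away from $I$. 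The limit argument equals $\pi/2$ on $I$, giving the exponential representation and square-root endpoint behavior uniformly in $n$.

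\textbf{Step 4 (Widom condition and regularity).} The gaps of $\mathsf{E}$ come in thin pairs around each split: a small gap $(x_0-\ell,x_0-\eta)$ and its mirror image. I would estimate the Green function by comparison with the set consisting of just the two nearby intervals, using monotonicity of $G$ in the set. This gives a maximum on the gap bounded by $C\sqrt{\text{gap length}/\text{scale}}$ times a factor decaying with generation. Choosing the scales to decay fast, say doubly exponentially, makes the sum in \eqref{eq:widom-definition} finite. Regularity follows because each point of $K$ is a limit of intervals of comparable size at every scale, so Wiener's criterion holds; interval points are trivially regular.
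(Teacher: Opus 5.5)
The genuine gap is Step~4, the Widom estimate. Your Steps~1--3 are a workable reformulation of the paper's mechanism. A local, zero-mean edit of the argument function $\xi$ is exactly what the multipliers are. In the paper's notation, $F_{c,h,\ell}(z)=\exp\int\frac{\Delta\xi(t)}{t-z}\dd t$, with $\Delta\xi=1,\tfrac12,-\tfrac12,-1$ on $(c-h,c-\ell)$, $(c-\ell,c)$, $(c,c+\ell)$, $(c+\ell,c+h)$. Summable edits that never touch a born band leave $\xi=\tfrac12$ there, which gives reflectionlessness because $|K|=0$.

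In your guard-free scheme, the gap beside a central band, roughly $(x_0+\eta,x_0+\ell)$, has length comparable to the splitting scale $\ell$. Everything bounding it is short: the band has length $2\eta\ll\ell$ (small $\eta/\ell$ is what keeps your $\epsilon_k$ small), and the child cluster is smaller still. So these gaps are not thin. A bound like $C\sqrt{\text{gap length}/\text{scale}}$ is the heuristic for a short gap between long bands, the opposite geometry, and has nothing to rest on here.

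The comparison you propose does not help either. For $S$ the two nearby intervals, $G_{\mathsf{E}}\le G_S$ is true. But $G_S$ has its pole at infinity and sees only two sets of diameter $\ll\ell$ at mutual distance about $\ell$. On the gap it is therefore of size about $\log(\ell/\capc S)$, with $\capc S\lesssim\sqrt{\eta\ell}$. This does not decay with the generation at all, while the Widom series contains $2^{n+1}$ such gaps at generation $n$. The ``factor decaying with generation'' is exactly what has to be proved.

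The missing idea is the localized screening estimate of Lemma~\ref{lem:screening}. Take a disk $D(c,r)$ missing the earlier bands $F$, and put $M=\max_{|z-c|=r}G_F$. On $D(c,r)\setminus T_s$, compare $G_{F\cup T_s}$ with $\frac{M}{L_s}G_T\bigl((z-c)/s\bigr)$. Here $T_s=c+sT$ is a rescaled copy of a \emph{fixed} shape $T$, and $L_s=\min_{|\zeta|=r/s}G_T(\zeta)\sim\log(r/s)$. The maximum principle gives $G_{F\cup T_s}\le M\max_{|\zeta|\le\rho}G_T(\zeta)/L_s$ on $D(c,\rho s)$. This tends to $0$ as $s\downarrow0$ with $T$ and $\rho$ fixed. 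The smallness is a ratio of logarithms, which is why the shape ratio must be fixed before the absolute scale is shrunk.

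The paper applies this to guard pairs $T=[-1,-k]\cup[k,1]$ placed around each child; your Step~1 has no guards. This gives $G_{B_n}\le\gamma_n$ on the whole corridor $P_v$, which contains the four gaps finalized at $v$ and all later activity there. It yields \eqref{eq:widom-sum}, and also regularity at $x\in K$ by continuity of $G_{B_n}$ near $x$, with no appeal to Wiener's criterion.

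In your guard-free scheme, the same comparison with $T=[-\lambda,\lambda]$, $\lambda=\eta/\ell$ fixed before $\ell\downarrow0$, bounds $G$ on $D(x_0,2\ell)$. That disk contains both gaps adjacent to the central band and the entire future cluster at $x_0$, so Step~4 can be repaired along these lines. Regularity should come from the same bound. With $\eta/\ell\to0$ and rapidly shrinking scales, points of $K$ do not see ``intervals of comparable size at every scale''.

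There are two smaller slips. First, $\prod_k(1-\epsilon_k)\ge e^{-\sum_k\epsilon_k}$ is backwards, since $1-x\le e^{-x}$. Second, the passage to the limit for $\mu(K)$ and for atomlessness must go through fixed cylinders, not through atom masses at birth. The lower bound uses the closed sets $A_n$ and Portmanteau. The upper bound needs support-free collars at $\partial N_v$ and a summable bound on all later changes of $\mu_j(N_v)$, as in \eqref{eq:cylinder-invariant}.
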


Our interest in this work originates from the parameterization of isospectral reflectionless (Jacobi, Schr\"odinger, Dirac) operators \cite{Craig89,SY97,VY14,DY2016,BDGL18,FLLZ,DELL25,LiLukic2026} and canonical systems \cite{Yuditskii2011,BLY24} and the broader scope of Kotani theory \cite{Kotani1982,DS1983,Remling11,BLY22}. In the literature, reflectionlessness  lives on the support of the absolutely continuous spectral measure of an almost periodic operator where the Lyapunov exponent vanishes. In the context of operators, the definition of relectionlessness appears as vanishing of the real part of the diagonal Green's functions or equivalently complex conjugation of the half-line Titchmarsh-Weyl $m$ functions up to a sign.

Despite that for sufficiently nice $\mathsf{E} \subset\R$, reflectionlessness implies absence of singular components of the measure \cite{PR2009}, it remains an interesting question whether there exists a reflectionless measure with a nonempty singular (continuous) component. 
Nazarov, Volberg and Yuditskii~\cite{NVY2007} proposed a construction in which
mass is repeatedly divided between two small intervals. The intended
singular component is supported on the limit set of the intervals.
The local splitting mechanism needs to be supplemented by estimates that
hold at the end of each generation of operations, by control of every subsequent
change to an earlier result, and by a justification of boundary values
in the weak limit. The purpose of this paper is to give a complete
implementation of that program.

\emph{Idea of the construction:} The construction is inductive. An active atom is a point mass to be split. There are two elementary operations. The first operation is to replace an active atom by a band that centered at the atom and two child atoms. The resulting measure has positive density on the central band and point masses at child atoms. The second operation is to insert two guard bands that are symmetric about the two child atoms. The mass of the child atom will be reduced and redistributed to the guard bands, producing positive density on these bands. Then repeat the first operation on child atoms, then the second operation and so on. Eventually, every finite stage atom retires and the limiting measure is atomless. Each operation corresponds multiplying the Cauchy transform of the current measure by a factor given in Lemma \ref{lem:splitting} and Lemma \ref{eq:guard-factor}. The inductive step and quantitative control are given by Proposition \ref{prop:induction}. To prove that the resulting domain is Widom type, we need certain control of the Green function, Lemma \ref{lem:screening} servers this purpose. We take the subordinate principle of Green functions \cite[Section 4.4, Page 107]{Ransford} for granted in the analysis.

\section{Finite-stage transforms and local operations}\label{sec:operations}
In this section, we establish some basic results in order to control the measure distributions in each step of the operations. 

Given $r>0$, define the square root factor associated to a \emph{cut} $[-r,r]$ by
\begin{align}\label{eq:root}
    D_r(z)=\sqrt{z^2-r^2},\, D_r(z)/z\to 1 \text{ as } z\to\infty.
\end{align}
We choose the branch of the square root as follows. For $x\in\R$, $D_r(x)$ has the sign of $x$ for $|x|\geq r$ and $D_r(x+i0)=i\sqrt{r^2-x^2}$ for $|x|<r$. 
At every finite stage, the transforms used below have finitely many
interval cuts, integrable square-root singularities at their endpoints,
finitely many simple real poles, conjugation symmetry, and asymptotic expansion
$-1/z+O(z^{-2})$ at infinity. For a function $R(z)$ in this class, nonnegative
upper boundary imaginary parts on the cuts and nonpositive residues
at the poles imply that it is the Cauchy transform of a positive
probability measure. The Stieltjes inversion formula
gives density of the absolutely continuous component in the decomposition \eqref{eq:decomposition}
\begin{equation}\label{eq:inversion}
 w(x)=\frac1\pi\operatorname{Im}R(x+i0)
\end{equation}
on the cuts and mass $-\operatorname{Res}_{z=c}R$ at a pole $c$.
The difference between $R$ and this Cauchy transform has removable
singularities and vanishes at infinity. The coefficient of $-1/z$ in the asymptotic expression
gives the total mass one. 

Given an ordered
sequence of bands $[\alpha_j,\beta_j]$, $1\le j\le n$, define the canonical product as
\begin{equation}\label{eq:finite-product}
 R(z)=-\frac{\prod_{j=1}^{n-1}(z-\xi_j)}
 {\prod_{j=1}^{n}\sqrt{(z-\alpha_j)(z-\beta_j)}},
 \quad \xi_j\in[\beta_j,\alpha_{j+1}].
\end{equation}
Each root in the denominator is asymptotic to $z$ at infinity.
We allow the degenerate case $\alpha_j=\beta_j=c$, in which  the corresponding factor in the denominator
is $z-c$. 

The first operation replaces an atom by two atoms and a central band.

\begin{lemma}\label{lem:splitting}
Let $R=\mathcal{C}_\nu$ be a finite-stage Cauchy transform of a positive probability measure $\nu$ and suppose that a neighborhood of $c$ contains no other support except an atom of mass $m=\nu(\{c\})>0$
at $c$. In the neighborhood of $c$, let
\begin{equation}\label{eq:local-H}
 R(z)=-\frac{H(z)}{z-c},\qquad H(c)=m,
\end{equation}
where $H$ is analytic and real on the real axis. Choose $h>0$ so small
that $H>0$ on $[c-h,c+h]$ and this interval meets no other support of $\nu$.
For $0<\ell<h$, let
\begin{equation}\label{eq:split-factor}
 F_{c,h,\ell}(z)=
 \frac{(z-c)D_\ell(z-c)}{(z-c)^2-h^2},\quad
 \widetilde R(z)=R(z) F_{c,h,\ell}(z).
\end{equation}
Then $\widetilde R(z)$ is the Cauchy transform of a positive probability measure denoted by $\widetilde\nu$.
It replaces the atom at $c$ by the interval $J=[c-\ell,c+\ell]$
and two atoms at $c\pm h$, whose masses are
\begin{equation}\label{eq:child-masses}
 \widetilde m_\pm=
 \frac{H(c\pm h)}2\sqrt{1-(\ell/h)^2}.
\end{equation}
On $J$, its density of the a.c. component is
\begin{equation}\label{eq:retired-density}
 \widetilde w(x)=\frac{H(x)}\pi
 \frac{\sqrt{\ell^2-(x-c)^2}}{h^2-(x-c)^2},
\end{equation}
and consequently
\begin{equation}\label{eq:leakage}
 \widetilde\nu(J)\le
 \frac{\max_J H}{2}\frac{\ell^2}{h^2-\ell^2}.
\end{equation}
\end{lemma}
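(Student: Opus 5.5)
The plan is to verify directly that $\widetilde R=RF_{c,h,\ell}$ belongs to the finite-stage class described above. Then the positivity criterion stated there (nonnegative boundary imaginary parts on cuts, nonpositive residues at poles, and expansion $-1/z+O(z^{-2})$) identifies it as the Cauchy transform of a positive probability measure.

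\emph{Structure of $\widetilde R$.} First, $F_{c,h,\ell}(z)\to 1$ as $z\to\infty$, since $D_\ell(z-c)\sim z-c$. Hence $\widetilde R=-1/z+O(z^{-2})$, and the total mass is one. Away from $[c-h,c+h]$ the factor $F$ is analytic, real on $\R$ and positive there. Indeed, for real $x$ with $|x-c|>h$ the numerator $(x-c)D_\ell(x-c)$ is positive because $D_\ell$ carries the sign of $x-c$, and the denominator is positive. Therefore the boundary imaginary parts of $\widetilde R$ on the remote cuts are multiplied by a positive factor, and the residues at remote poles are multiplied by positive numbers. The sign conditions are thus inherited, and the boundary behaviour on old cuts is unchanged in type. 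The only possible new singularities lie in $[c-h,c+h]$.

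\emph{Local computation near $c$.} There $\widetilde R(z)=-H(z)D_\ell(z-c)/((z-c)^2-h^2)$, using \eqref{eq:local-H}. The simple pole at $c$ cancels, so no atom remains at $c$. At $z=c\pm h$ there are simple poles, with residue
\[
-\frac{H(c\pm h)D_\ell(\pm h)}{\pm 2h}=-\frac{H(c\pm h)}{2}\sqrt{1-(\ell/h)^2}.
\]
This uses $D_\ell(\pm h)=\pm\sqrt{h^2-\ell^2}$. The residue is nonpositive because $H>0$, which gives \eqref{eq:child-masses}. On $(c-\ell,c+\ell)$ we have $D_\ell(x-c+i0)=i\sqrt{\ell^2-(x-c)^2}$, so
\[
\operatorname{Im}\widetilde R(x+i0)=H(x)\sqrt{\ell^2-(x-c)^2}/(h^2-(x-c)^2)\ge0.
\]
The Stieltjes inversion formula \eqref{eq:inversion} then gives \eqref{eq:retired-density}. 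On $\ell<|x-c|<h$ the function $\widetilde R$ is real and analytic, so no mass is created there. The singularities at $c\pm\ell$ are of square-root type and integrable. Thus $\widetilde R$ is in the class, and $\widetilde\nu$ is as described.

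\emph{Leakage bound.} For \eqref{eq:leakage}, bound $H\le\max_J H$ and $h^2-(x-c)^2\ge h^2-\ell^2$ on $J$. This gives
\[
\widetilde\nu(J)\le\frac{\max_J H}{\pi(h^2-\ell^2)}\int_{-\ell}^{\ell}\sqrt{\ell^2-t^2}\,dt=\frac{\max_J H}{\pi(h^2-\ell^2)}\cdot\frac{\pi\ell^2}{2},
\]
which is exactly the claimed bound.

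The main point needing care is the branch bookkeeping. One must check that $D_\ell(z-c)$ is compatible with the convention in \eqref{eq:root}, so that the residue signs at $c\pm h$ come out negative. One must also check that $F$ is positive on the rest of $\R$, which guarantees the old sign conditions survive. Both follow from the sign convention for $D_\ell$ on $|x-c|\ge\ell$. I expect no serious obstacle beyond this.
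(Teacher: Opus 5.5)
Your proposal is correct and follows essentially the same route as the paper. Like the paper, you write $\widetilde R$ locally as $-H(z)D_\ell(z-c)/((z-c)^2-h^2)$, read off the residues at $c\pm h$ and the boundary density on $J$, integrate using $h^2-(x-c)^2\ge h^2-\ell^2$, and use positivity of $F_{c,h,\ell}$ for $|x-c|>h$ to preserve the old components; you are more explicit than the paper about the branch signs, the cancellation of the pole at $c$, and the normalization at infinity.
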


\begin{proof}
Locally the new transform is
\[
 \widetilde R(z)=-\frac{H(z)D_\ell(z-c)}{(z-c)^2-h^2}.
\]
The residues at $c-h$ and $c+h$ give
\eqref{eq:child-masses}. Its upper boundary value $\lim_{\epsilon\to 0^+}\widetilde R(x+i\epsilon)$ on $J$ gives
\eqref{eq:retired-density}. Since
$h^2-(x-c)^2\ge h^2-\ell^2$ there and
$\int_{-\ell}^{\ell}\sqrt{\ell^2-u^2}\dd u=\pi\ell^2/2$,
integrating \eqref{eq:retired-density} gives \eqref{eq:leakage}.
For $|x-c|>h$, $F_{c,h,\ell}(x)>0$. Thus the old densities and
the other atomic masses remain positive. 
\end{proof}
\begin{remark}
Notice that if we denote $w(x)$ the density of $\nu$ restricted to $\R\setminus [c-h,c+h]$, then $\widetilde w(x)=w(x)F_{c,h,\ell}(x)$. Since $F(c,h,\ell)(x)>0$ on $\R\setminus [c-h,c+h]$, $\nu$ and $\widetilde\nu$ have the same support outside $[c-h,c+h]$. Because $F_{c,h,\ell}(\infty)=1$, the total mass remains to be one.
\end{remark}
The following operation reduces the mass at an atom and distributes mass to two nearby bands.
\begin{lemma}\label{lem:guards}
Retaining \eqref{eq:local-H}, fix $k\in(0,1)$ and choose $a>0$ so small
that $H>0$ on $[c-a,c+a]$ and the interval meets no other support.
Denote
\begin{equation}\label{eq:guard-factor}
 \Gamma_{c,a,k}(z)=\frac{D_{ka}(z-c)}{D_a(z-c)}.
\end{equation}
The quotient extends analytically across $(c-ka,c+ka)$ and has cuts
\begin{equation}\label{eq:guard-bands}
 L=[c-a,c-ka],\qquad Q=[c+ka,c+a].
\end{equation}
The product $R(z)\Gamma_{c,a,k}(z)$ is the Cauchy
transform of a positive probability measure. It retains the atom at $c$ with mass $km$, creates positive
densities on $L$ and $Q$, and retains every old support component.
\end{lemma}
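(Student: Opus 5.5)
The plan mirrors the proof of Lemma~\ref{lem:splitting}: check that the product $\widetilde R=R\,\Gamma_{c,a,k}$ stays in the finite-stage class described at the start of this section, and then read off its boundary imaginary parts and residues. Once these are nonnegative and nonpositive respectively, the criterion stated there identifies $\widetilde R$ as the Cauchy transform of a positive probability measure.

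\textbf{Analytic structure of $\Gamma=\Gamma_{c,a,k}$.} Write $u=z-c$, so that $\Gamma=\sqrt{u^2-k^2a^2}/\sqrt{u^2-a^2}$ with both roots normalized by $D_r(u)/u\to1$.
\begin{itemize}
\item For $|x-c|>a$ both roots have the sign of $u$, so $\Gamma(x)>0$ there.
\item For $|x-c|<ka$ both upper boundary values are purely imaginary with the same sign: $\Gamma(x+i0)=\sqrt{k^2a^2-u^2}/\sqrt{a^2-u^2}$, which is real and positive. The same value is obtained from below. Hence the jumps cancel, and $\Gamma$ extends analytically and positively across $(c-ka,c+ka)$.
\item $\Gamma(z)\to1$ at infinity, and $\Gamma(c)=k$.
\item At $c\pm ka$ the function vanishes like a square root. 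At $c\pm a$ it has an inverse square-root singularity, which is integrable.
\end{itemize}
So $\widetilde R$ has the cuts $L$ and $Q$ in addition to the old ones, and it is conjugation symmetric.

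\textbf{Mass and old support.} At infinity, $\widetilde R=-1/z+O(z^{-2})$ because $\Gamma=1+O(z^{-2})$, so the total mass is still one. The only pole of $\widetilde R$ inside $[c-a,c+a]$ is at $c$. Its residue is $-H(c)\Gamma(c)=-km$, which gives the atom of mass $km$. Outside $[c-a,c+a]$, $\Gamma$ is real and positive. Therefore every old density is multiplied by a positive factor and every old residue is multiplied by a positive number, so all old support components are retained with the correct signs.

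\textbf{Guard bands.} On $L\cup Q$ we have $ka<|u|<a$. There $D_{ka}(u)=\operatorname{sgn}(u)\sqrt{u^2-k^2a^2}$ is real, while
\[
D_a(u+i0)=i\sqrt{a^2-u^2},
\]
so $\Gamma(x+i0)=-i\,\operatorname{sgn}(u)\sqrt{u^2-k^2a^2}/\sqrt{a^2-u^2}$. Combined with $R=-H/u$ this gives
\[
\widetilde R(x+i0)=\frac{i\,H(x)}{|u|}\,\frac{\sqrt{u^2-k^2a^2}}{\sqrt{a^2-u^2}}.
\]
Since $H>0$ on $[c-a,c+a]$, this is purely imaginary with positive imaginary part. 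By \eqref{eq:inversion} it defines a positive, integrable density on $L$ and $Q$.

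\textbf{Main obstacle.} The step needing the most care is the branch bookkeeping on $L$ and $Q$. The factors $\operatorname{sgn}(u)$ coming from $D_{ka}$ and from $1/(z-c)$ must cancel on both bands, and not on just one of them. A related check is that $(c-ka,c+ka)$ is not a cut, so no spurious density or sign change appears near the atom. With this verified, the finite-stage criterion completes the argument.
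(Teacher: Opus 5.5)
Your proposal is correct and takes essentially the same approach as the paper. You check that $R\,\Gamma_{c,a,k}$ stays in the finite-stage class, note that $\Gamma_{c,a,k}$ is positive off $[c-a,c+a]$ and equals $k$ at $c$, and use the signs of $R=-H/(z-c)$ and of $\Gamma_{c,a,k}(x+i0)$ on $L$ and $Q$; your explicit formula $\widetilde R(x+i0)=iH(x)|u|^{-1}\sqrt{u^2-k^2a^2}/\sqrt{a^2-u^2}$ simply handles the paper's two-case sign check in one computation.
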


\begin{proof}
The multiplier is positive on the central gap $(c-ka,c+ka)$ and equals $k$ at $c$.
On the left guard $L$ its upper boundary value is positive imaginary,
whereas $R(x)>0$. Indeed, by the definition of $D_r(x)$, $D_{ka}(x-c)$ has the negative sign on $[c-a,c-ka]$. On the right guard $Q$ it is negative imaginary,
whereas $R(x)<0$. Both new densities are therefore positive.
On the other old support components the multiplier is positive real.
The new endpoint singularities are at worst inverse square roots,
and the multiplier tends to one at infinity. The Stieltjes inversion formula implies that the other components of the support are retained.
\end{proof}
The next lemma explains the continuity of each operation as the corresponding scales shrink to zero.
\begin{lemma}\label{lem:continuity}
For fixed $0<\lambda<1$ and $0<k<1$,
\begin{equation}\label{eq:multiplier-continuity}
 F_{c,h,\lambda h}\to 1\quad(h\downarrow0),\qquad
 \Gamma_{c,a,k}\to 1\quad(a\downarrow0)
\end{equation}
locally uniformly on $\C\setminus\{c\}$.
If all measures are supported in a fixed compact interval, each
operation consequently converges weakly to the input measure as its
scale tends to zero. It also changes every other isolated atomic mass
by a factor tending to one. The masses of any fixed intervals whose
boundaries lie outside the input support converge to their input masses.
\end{lemma}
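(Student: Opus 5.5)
The argument splits into two parts: explicit convergence of the two multipliers, then a transfer of that convergence to measures through Cauchy transforms.

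\emph{Convergence of the multipliers.} Put $u=z-c$. Then
\[
F_{c,h,\lambda h}(z)=\frac{u\,D_{\lambda h}(u)}{u^2-h^2},\qquad
\Gamma_{c,a,k}(z)=\frac{D_{ka}(u)}{D_a(u)} .
\]
Fix a compact $S\subset\C\setminus\{c\}$ and let $\delta=\operatorname{dist}(c,S)>0$. For $h<\delta/2$ and $|u|\ge\delta$, the normalization $D_r(u)/u\to1$ gives the convergent expansion $D_r(u)=u\sqrt{1-r^2/u^2}$ with the principal root. Since $|r^2/u^2|\le 1/4$, this yields $|D_r(u)/u-1|\le C r^2/\delta^2$. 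Combining this with $|u^2/(u^2-h^2)-1|\le Ch^2/\delta^2$ gives
\[
|F_{c,h,\lambda h}-1|\le C h^2/\delta^2 \quad\text{on } S .
\]
The same argument gives $|\Gamma_{c,a,k}-1|\le Ca^2/\delta^2$. In particular, both multipliers are uniformly close to $1$ on every compact set avoiding $c$.

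\emph{Weak convergence.} Let $\nu_s$ be the output measure at scale $s$ and $\nu$ the input. All of them are probability measures (Lemmas \ref{lem:splitting} and \ref{lem:guards}) supported in a fixed interval $[-M,M]$, so it suffices to show $\mathcal C_{\nu_s}\to\mathcal C_\nu$ pointwise on $\C\setminus\R$. Indeed, functions $1/(t-z)$ span a dense subalgebra of $C_0$ by Stone--Weierstrass, and tightness is automatic from the common support. Pointwise convergence holds because $\mathcal C_{\nu_s}=R\cdot(\text{multiplier})$ and the multiplier tends to $1$ at each nonreal $z$.

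\emph{Other atoms.} An isolated atom $c'\ne c$ has mass $-\operatorname{Res}_{c'}R$. For small scale, $c'$ lies outside $[c-h,c+h]$ (respectively $[c-a,c+a]$), so the multiplier is analytic near $c'$. Hence the new mass is the old mass times the multiplier's value at $c'$, and that value tends to $1$ by the first step.

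\emph{Fixed intervals.} Let $[p,q]$ have $p,q\notin\supp\nu$. Choose a small rectangular contour $\gamma$ crossing $\R$ exactly at $p$ and $q$. For small scale the new support near $c$ lies in an arbitrarily small neighborhood of $c$, so if $c\notin[p,q]$ we may take $\gamma$ at positive distance from $c$ and from all supports on its real crossings. Then
\[
\nu_s([p,q])=-\frac1{2\pi i}\oint_\gamma \mathcal C_{\nu_s}(z)\,dz .
\]
Uniform convergence $\mathcal C_{\nu_s}\to\mathcal C_\nu$ on $\gamma$ follows from the first step, since $R$ is bounded on $\gamma$.

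If instead $c\in(p,q)$, the whole local perturbation stays inside $(p,q)$ for small scale. In that case we use the same contour: it never approaches $c$, and the contour integral counts all mass inside $\gamma$, including the new bands and atoms.

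The main obstacle is the boundary crossings. The contour must cross the real axis where $R$ is analytic, which is exactly the hypothesis that the endpoints lie outside the input support. We also need the new support created by the operation to stay away from $p$ and $q$; this holds because it lies within distance $h$ or $a$ of $c$ and $\operatorname{dist}(c,\{p,q\})>0$ (recall $c\in\supp\nu$).
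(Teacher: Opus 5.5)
Your proof is correct and follows the same outline as the paper's. You show uniform convergence of the multipliers off $c$, transfer it to the measures through $\mathcal{C}_{\nu_s}=R\cdot\mathcal{Q}$, and use the residue identity $\operatorname{Res}_{c'}(R\mathcal{Q})=\mathcal{Q}(c')\operatorname{Res}_{c'}R$ for the other atoms. You also supply the explicit $O(h^2/\delta^2)$ and $O(a^2/\delta^2)$ bounds that the paper leaves to ``their expressions''. Two sub-steps use different standard tools.

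\textbf{Weak convergence.} The paper uses compactness of probability measures on a fixed interval together with uniqueness of the Cauchy transform: every subsequential limit has transform $R$. You instead use density of the resolvent functions $t\mapsto 1/(t-z)$ in $C_0(\R)$. The two arguments are interchangeable. One small point: the linear span of these functions is not itself an algebra, since $1/(t-z)^2$ lies only in its uniform closure as a limit of difference quotients. So Stone--Weierstrass should be applied to the closure of the span.

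\textbf{Interval masses.} The paper invokes the portmanteau theorem: $p,q\notin\supp\nu$ makes $[p,q]$ a $\nu$-continuity set, and nothing about the perturbed measures is needed. Your contour formula $\nu_s([p,q])=-\frac{1}{2\pi i}\oint_\gamma\mathcal{C}_{\nu_s}\,dz$ needs the extra check that the new support stays away from $p$ and $q$, which you correctly supply. In return it bypasses weak convergence entirely. Combined with your multiplier bound, it gives an explicit rate $O(h^2)$ or $O(a^2)$, which makes the scale choices in Proposition~\ref{prop:induction} effective rather than merely existential. Your case distinction on whether $c\in(p,q)$ is unnecessary, because $\gamma$ meets $\R$ only at $p$ and $q$, and both differ from $c$.
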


\begin{proof}
The uniform convergence follows directly from their expressions. It follows that
 the Cauchy transforms in the upper half-plane converge locally uniformly.
Since the space of probability measures on $[-1,1]$ in the topology of weak convergence is compact, every sequence of the corresponding probability measures has a weakly
convergent subsequence. The Cauchy transform of the limiting measure is the input transform,
which determines the input measure uniquely; hence the full sequence
converges. The assertions about other atoms and fixed intervals follow,
respectively, by multiplying their residues and by weak convergence
on continuity sets.
\end{proof}

\subsection{Endpoint-divisor bookkeeping}\label{subsec:divisors}
In each elementary  operation, an active atom $c$ has two
adjacent nondegenerate bands. The divisors of its adjacent gaps are
anchored at the endpoints of those bands nearest $c$, not at $c$. Splitting $c$ changes its denominator
factor $z-c$ into
\[
 \frac{D_\ell(z-c)\bigl((z-c)^2-h^2\bigr)}{(z-c)^2-\ell^2}.
\]
Equivalently, the two new gaps between the retired band and the child
points have divisors $c-\ell$ and $c+\ell$. The old outer divisors are
retained. Inserting guards at a child replaces its two adjacent gaps
by four gaps: the old divisors remain in the two outer gaps, while the
new inner-gap divisors are the guards' inner endpoints. The factor
introduced is exactly \eqref{eq:guard-factor}. Thus both operations
preserve \eqref{eq:finite-product} and this endpoint invariance. After
a complete operation, each child is flanked by its own designated
guard pair.

\section{Uniform screening by thin intervals}\label{sec:screening}
Let $F\subset \R$ be a compact regular set and let $G_F(z)$ be the Green function of the unbounded domain $\Omega=\C\setminus F$ with a  pole at infinity. Then  
\begin{equation}\label{eq:green-normalization}
 G_F(z)=\log|z|-\log\capc(F)+o(1)\qquad(z\to\infty).
\end{equation}
Basic facts about logarithmic capacity, Green functions, and regularity
may be found in~\cite{Ransford}. We will need an estimate of the Green function on the interval (corridor) between the two inserted guards for an atom.
\begin{lemma}\label{lem:screening}
Let $F\subset\R$ be a finite union of nondegenerate compact intervals,
and let $c\notin F$. Fix $k\in(0,1)$ and let
\[
 T_k=[-1,-k]\cup[k,1],\qquad T_a=c+aT_k.
\]
Then
\begin{equation}\label{eq:screening-limit}
 \sup_{|x-c|\le ka}G_{F\cup T_a}(x)\to 0
 \qquad(a\downarrow 0).
\end{equation}
\end{lemma}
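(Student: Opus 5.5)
Because Green functions decrease as the compact set grows (the subordination principle), we have $G_{F\cup T_a}\le G_{T_a}^{U}$, where $G^{U}_{T_a}$ denotes the Green function of a suitable bounded domain. Concretely, fix $\rho>0$ with $\operatorname{dist}(c,F)>2\rho$, and take $a<\rho$. We plan to show that on the corridor $|x-c|\le ka$ the harmonic function $G_{F\cup T_a}$ is dominated by its maximum on the circle $|z-c|=\rho$ times the harmonic measure of that circle. That harmonic measure is taken in the domain $U_a=\{|z-c|<\rho\}\setminus T_a$, seen from the corridor. Write $M=\max_{|z-c|=\rho}G_{F}(z)$; this is finite and independent of $a$. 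Since $G_{F\cup T_a}\le G_F\le M$ on the circle, $G_{F\cup T_a}=0$ on $T_a$, and $G_{F\cup T_a}$ is harmonic on $U_a$, the maximum principle gives
\[
 G_{F\cup T_a}(x)\le M\,\omega\bigl(x,\{|z-c|=\rho\},U_a\bigr),\qquad |x-c|\le ka.
\]
Regularity of $F\cup T_a$ (finite union of intervals) justifies the boundary values.

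The whole problem now reduces to showing that this harmonic measure tends to zero uniformly on the corridor. Rescale by $z\mapsto (z-c)/a$. The domain becomes $V_a=\{|w|<\rho/a\}\setminus T_k$, the corridor becomes $[-k,k]$, and the harmonic measure becomes that of the circle $|w|=R$ with $R=\rho/a\to\infty$. This harmonic measure is bounded by the analogous quantity for the fixed compact set $T_k$. Since $T_k$ is regular with positive capacity, comparison with $G_{T_k}$ gives
\[
 \omega\bigl(w,\{|w|=R\},V_a\bigr)\le \frac{G_{T_k}(w)}{\min_{|w|=R}G_{T_k}}.
\]
Both sides are harmonic in $V_a$, vanish on $T_k$, and on the circle the right side is $\ge1$.

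The key point is now that $G_{T_k}$ is bounded on $[-k,k]$: it is continuous on the compact set $[-1,1]$ because $T_k$ is regular. Meanwhile $\min_{|w|=R}G_{T_k}=\log R-\log\capc(T_k)+o(1)\to\infty$ by \eqref{eq:green-normalization}. Hence
\[
 \sup_{|x-c|\le ka}G_{F\cup T_a}(x)\le \frac{M\max_{[-k,k]}G_{T_k}}{\log(\rho/a)-\log\capc(T_k)+o(1)}\to0,
\]
which proves \eqref{eq:screening-limit}, in fact at the rate $O(1/\log(1/a))$.

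The step needing the most care is the first comparison: checking that $G_{F\cup T_a}$ is genuinely subharmonic-dominated on $U_a$ with the stated boundary data. In particular, one must confirm that $U_a$ avoids $F$, which is why $\rho<\operatorname{dist}(c,F)/2$ is chosen, and that the boundary values on $T_a$ are zero rather than merely quasi-everywhere zero. Both follow from the regularity of finite unions of nondegenerate intervals.
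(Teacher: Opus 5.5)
Your proof is correct and is essentially the paper's argument. The paper compares $G_{F\cup T_a}$ directly with $\frac{M}{L_a}G_{T_k}\bigl((z-c)/a\bigr)$ on $D(c,r)\setminus T_a$, where $L_a=\min_{|\zeta|=r/a}G_{T_k}(\zeta)$. Your two-step comparison through the harmonic measure of the circle composes to exactly that function and gives the same bound $M\max_{[-k,k]}G_{T_k}/L_a\to0$.
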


\begin{proof}
Choose $r>0$ with $\overline{D(c,r)}\cap F=\varnothing$, and denote
\[
 M=\max_{|z-c|=r}G_F(z),\qquad
 L_a=\min_{|\zeta|=r/a}G_{T_k}(\zeta).
\]
By \eqref{eq:green-normalization}, $L_a\to\infty$ as $a\downarrow 0$.
For sufficiently small $a$, compare on $D(c,r)\setminus T_a$ the
functions $G_{F\cup T_a}$ and
\[
 z\mapsto\frac{M}{L_a}G_{T_k}\bigl((z-c)/a\bigr).
\]
Both vanish continuously on the guards. On the circle, the Subordinate Principle gives $G_{F\cup T_a}\le G_F\le M$, while the comparison
function is at least $M$. The maximum principle yields
\begin{equation}\label{eq:screening-bound}
 \sup_{|x-c|\le ka}G_{F\cup T_a}(x)
 \le\frac{M}{L_a}\max_{|t|\le k}G_{T_k}(t)
 \to 0.
\end{equation}
The last maximum is finite because $T_k$ is a fixed regular compact
set. Explicitly it equals
$G_{T_k}(0)=\tfrac12\log((1+k)/(1-k))$.
\end{proof}

\begin{remark}\label{rem:polar}
The isolated active points of a finite-stage support are polar and
need not be regular boundary points. Every finite-stage Green function
in the construction means the Green function of the nondegenerate
bands alone, or its removable harmonic extension through the isolated
points. No vanishing condition is imposed at an active atom.
\end{remark}

\section{The inductive construction}\label{sec:induction}

Let $\mathcal T=\bigcup_{n\ge0}\{0,1\}^n$ be the binary tree, whose
root is the empty word $\varnothing$. Write $|v|$ for depth/length and $v0,v1$
for the children of $v$. A \emph{permanent band} is an interval retained
in every subsequent support. An \emph{active atom} is an isolated
point to be split in the following generation.

Two intervals (guards) are associated with an active vertex $v$. Its
\emph{corridor} $P_v$ is the entire open interval between its guards'
inner endpoints; its \emph{pocket} is a smaller closed interval
$N_v=[c_v-r_v,c_v+r_v]\Subset P_v$ centered at the active atom $c_v$.
Corridors are used for Green function estimates, whereas pockets are used
for mass estimates. They cannot be interchanged in the argument.

We equip the space of probability measures on $[-1,1]$ with the  Kantorovich--Rubinstein metric
\begin{equation}\label{eq:BL}
 d_{KR}(\sigma,\tau)=
 \sup_{\substack{\|f\|_\infty\le1\\\Lip(f)\le1}}
 \left|\int f\dd\sigma-\int f\dd\tau\right|,
\end{equation}
which induces the weak topology. Fix the error sequences
\begin{equation}\label{eq:budgets}
 \eps_n=2^{-n-6},\quad \delta_n=4^{-n-2}\quad(n\ge0),
 \qquad \gamma_n=2^{-3n}\quad(n\ge1).
\end{equation}
In particular,
\begin{equation}\label{eq:budget-sums}
 S:=\sum_{n\ge0}\eps_n=\frac1{32},\qquad
 \sum_{j\ge n}\delta_j=\frac{4^{-n}}{12},\qquad
 \sum_{n\ge1}2^n\gamma_n<\infty.
\end{equation}

\begin{proposition}\label{prop:induction}
There are finite-stage transforms $R_n=\mathcal{C}_{\mu_n}$, unions $B_n$ of
permanent bands, and exactly $2^n$ active atoms $c_v$, $|v|=n$, with
masses $m_v=\mu_n\{c_v\}$, having the following properties.
\begin{enumerate}
\item Every $\mu_n$ is a positive probability on $[-1,1]$, its support
is $B_n\cup\{c_v:|v|=n\}$, and the finite endpoint-divisor invariance
of Section~\ref{subsec:divisors} holds.
\item The pockets of parent and children satisfy
\begin{equation}\label{eq:pocket-geometry}
 N_{v0}\cap N_{v1}=\varnothing,\qquad
 N_{v0}\cup N_{v1}\Subset\Int N_v,\qquad
 |N_v|\le2^{-2|v|}.
\end{equation}
At its birth, $N_v$ contains only the atom $c_v$. All modifications
at $v$ lie in the interior of the middle half of $N_v$.
\item Each permanent band $J$ has a compact complex neighborhood
$D_J$ containing $J$ in its interior and disjoint from all active
pockets at the end of its birth generation. It is also disjoint
from the other support components at that generation.
\item At the end of the transition $n\to n+1$,
\begin{align}
 \tfrac12e^{-\eps_n}m_v&\leq m_{vi}\leq
 \tfrac12e^{\eps_n}m_v\quad(i=0,1),\label{eq:mass-invariant}\\
 d_{KR}(\mu_{n+1},\mu_n)&\leq\delta_n,\label{eq:weak-invariant}\\
 |\mu_{n+1}(N_w)-\mu_n(N_w)|&\leq\delta_n
 \quad(|w|\leq n),\label{eq:cylinder-invariant}\\
 \sup_{D_J}|R_{n+1}/R_n-1|&\leq\delta_n
 \quad(J\subset B_n).\label{eq:analytic-invariant}
\end{align}
In \eqref{eq:analytic-invariant}, the quotient denotes the explicit
product of the local multipliers, which is analytic on $D_J$.
\item The Green functions of the permanent bands satisfy
\begin{equation}\label{eq:green-invariant}
 \sup_{x\in P_v}G_{B_n}(x)\le\gamma_n
 \quad(|v|=n\ge1).
\end{equation}
\end{enumerate}
The root atom can be taken to have mass $q=1/2$.
\end{proposition}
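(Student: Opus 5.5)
The plan is to prove the proposition by induction on $n$, building each transition $n\to n+1$ from a finite sequence of applications of Lemma~\ref{lem:splitting} and Lemma~\ref{lem:guards}. All scales are chosen sequentially and small enough that Lemma~\ref{lem:continuity} and Lemma~\ref{lem:screening} control every earlier quantity.

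\emph{Base case.} Take $\mu_0=\tfrac12\delta_0+\tfrac12\cdot$(normalized arcsine measure of a band pair near $\pm1$). Concretely, start from $R=-1/z$ and apply Lemma~\ref{lem:guards} at $c=0$ with $a$ close to $1$ and $k=1/2$. This leaves an atom of mass $q=1/2$ at $c_\varnothing=0$ and creates two guard bands, which form $B_0$. Choose a pocket $N_\varnothing\Subset P_\varnothing$. The divisor invariance of Section~\ref{subsec:divisors} holds trivially, and (v) is vacuous at $n=0$.

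\emph{Inductive step.} Order the $2^n$ active atoms $c_v$, $|v|=n$, and treat them one at a time. For each $v$ the step has two parts.

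\textbf{Splitting.} Pick $h_v$ so small that $[c_v-h_v,c_v+h_v]$ lies in the interior of the middle half of $N_v$, and set $\ell_v=\lambda h_v$ with $\lambda$ small. Apply Lemma~\ref{lem:splitting}. By \eqref{eq:child-masses}, the child masses equal $\tfrac12 H(c_v\pm h_v)\sqrt{1-\lambda^2}$. Since $H$ is continuous with $H(c_v)=m_v$, letting first $\lambda\to0$ and then $h_v\to0$ gives $H(c_v\pm h_v)\sqrt{1-\lambda^2}\in[e^{-\eps_n/2}m_v,e^{\eps_n/2}m_v]$. The retired band $J_v=[c_v-\ell_v,c_v+\ell_v]$ becomes permanent. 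We choose $D_{J_v}$ as a small complex disk around $J_v$ avoiding $c_v\pm h_v$.

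\textbf{Guards.} At each child $c_{vi}=c_v\pm h_v$, apply Lemma~\ref{lem:guards} with $a$ tiny and $k$ close to $1$. The child mass becomes $k$ times its value, so taking $k\ge e^{-\eps_n/2}$ yields \eqref{eq:mass-invariant}. Then choose pockets $N_{vi}$ inside the new corridors, with radius at most $2^{-2n-3}$, disjoint from each other and from $J_v$. This gives \eqref{eq:pocket-geometry} and (iii). Section~\ref{subsec:divisors} confirms that the endpoint divisor invariance persists.

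The order of choices within the step matters. All later scales are chosen after earlier ones, so each new multiplier is within a prescribed tolerance of $1$ on the fixed compact sets $D_J$ ($J\subset B_n$), which lie at positive distance from the pockets. Lemma~\ref{lem:continuity} then gives:
\begin{itemize}
\item \eqref{eq:analytic-invariant}, by multiplying finitely many factors, each close to $1$ uniformly on $D_J$;
\item \eqref{eq:weak-invariant}, by weak convergence in the metric $d_{KR}$;
\item \eqref{eq:cylinder-invariant}, since the endpoints of each $N_w$, $|w|\le n$, lie in gaps outside the current support, so $N_w$ is a continuity set and its mass converges.
\end{itemize}
We split $\delta_n$ into $2\cdot 2^n$ equal shares, one for each elementary operation.

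\emph{Green estimate (v).} Since $B_{n+1}\supset B_n\cup\{\text{new guards}\}$, subordination gives $G_{B_{n+1}}\le G_{B_n\cup T}$, where $T$ is the guard pair of $vi$. Apply Lemma~\ref{lem:screening} with $F$ equal to the current union of bands, which excludes $c_{vi}$. This shows the sup over the corridor $P_{vi}$ (the region $|x-c|\le ka$) tends to $0$ as $a\downarrow0$. So we can achieve $\le\gamma_{n+1}$ when each guard is inserted.

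The subsequent insertions in the same generation only enlarge the band set. Hence, again by subordination, they cannot spoil the estimate already achieved for the corridors. This monotonicity is essential: it makes the sequential choice consistent.

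I expect the main obstacle to be the compatibility of the many scale choices. Each new operation must be small relative to all previously fixed neighborhoods, tolerances and pocket boundaries, and must not disturb child masses already fixed in the same generation. I would handle this with an explicit finite list of constraints at each operation, each an open condition as the scale tends to $0$, using Lemma~\ref{lem:continuity}'s statement that other isolated atomic masses change by factors tending to one. With equal shares of $\eps_n/4$, this keeps every earlier child within its mass budget.
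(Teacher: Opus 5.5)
Your proposal follows the paper's proof essentially step by step. Your base transform $-\frac1z\,\Gamma_{0,a,1/2}(z)$ with $a=1$ is exactly $R_0=-D_q(z)/(zD_1(z))$; note that its band density $\frac1\pi\sqrt{x^2-q^2}/(|x|\sqrt{1-x^2})$ is not an arcsine density. The inductive step is also the paper's: split then guard, fix the ratios $\lambda_v,k_{vi}$ before the absolute scales, meet every constraint by shrinking scales via Lemma~\ref{lem:continuity}, and get (v) from Lemma~\ref{lem:screening} plus monotonicity of $G$ in the band set. The quantitative bookkeeping, however, does not close as written.

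\emph{Masses.} You spend $\eps_n/2$ on $\sqrt{1-\lambda^2}\,H(c_v\pm h_v)$ and another $\eps_n/2$ on $k$, which already exhausts the lower bound in \eqref{eq:mass-invariant}. Moreover $H(c_v)$ is the current mass of $c_v$, already altered by earlier operations of the same generation, so $H(c_v)=m_v$ holds only for the first vertex treated. An external budget of $e^{\pm\eps_n/4}$ would then break the lower bound. It is rescued only by a fact you never state: every external multiplier exceeds $1$ at every other atom. Indeed, with $u=x-c$, $F_{c,h,\ell}(x)=|u|\sqrt{u^2-\ell^2}/(u^2-h^2)>1$ for $|u|>h>\ell$, because $u^2(2h^2-\ell^2)>h^4$. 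Likewise $\Gamma_{c,a,k}(x)=\sqrt{u^2-k^2a^2}/\sqrt{u^2-a^2}>1$ for $|u|>a$. Either state this, or allot $\eps_n/4$ each to $\sqrt{1-\lambda_v^2}$, to $H(c_v\pm h_v)/H(c_v)$, to $k_{vi}$, and to all external factors together. The latter is what the paper does via \eqref{eq:H-Factor} and \eqref{eq:external-atom-budget}.

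\emph{Error shares.} A generation has $M_n=3\cdot2^n$ elementary operations (one split and two guard insertions per parent), not $2\cdot2^n$. Your shares therefore sum to $\tfrac32\delta_n$ in \eqref{eq:weak-invariant} and \eqref{eq:cylinder-invariant}, unless you merge the two guard insertions into one operation. For \eqref{eq:analytic-invariant} you also need a per-factor tolerance such as $\delta_n/(2M_n)$. With tolerance $\delta_n/M_n$ the product bound is only $e^{\delta_n}-1>\delta_n$, whereas $\delta_n/(2M_n)$ gives $e^{\delta_n/2}-1\le\delta_n$.

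\emph{Neighborhoods.} Item (iii) requires a protected neighborhood $D_J$ for every permanent band, including the four new guards per parent and the two root guards. You only construct $D_{J_v}$ for the central bands. The paper chooses these neighborhoods at the end of each generation, after the child pockets, so they avoid all pockets and other components.
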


\begin{proof}
\emph{The initial step:}
let us first set $q=1/2$ and
\begin{equation}\label{eq:initial}
 R_0(z)=-\frac{D_q(z)}{zD_1(z)},\quad
 B_0=[-1,-q]\cup[q,1].
\end{equation}
This is the transform of a positive probability measure  with an atom of mass $q$
at zero and density on $B_0$
\begin{equation}\label{eq:initial-density}
 w_0(x)=\frac1\pi\frac{\sqrt{x^2-q^2}}
 {|x|\sqrt{1-x^2}},\quad q<|x|<1.
\end{equation}
The two initial intervals are the root guards in $B_0$, and the initial corridor
$P_\varnothing=(-q,q)$. Choose a small closed root pocket $N_\varnothing$ centered at
zero and compactly contained in $P_\varnothing$, and choose protected
neighborhoods of the two root guards $[-1,-q]$ and $[q,1]$ disjoint from $N_{\varnothing}$.

\emph{The inductive step:} suppose generation $n$ has been constructed and let $v$ be a vertex so that $|v|=n$. At $v$, let
$N_v=[c_v-r_v,c_v+r_v]$ be the pocket. Let
\begin{equation}\label{eq:local-geometry}
 0<h_v<r_v/4,\qquad 0<\ell_v<h_v/4.
\end{equation}
Lemma~\ref{lem:splitting} creates the central band $J_v=[c_v-\ell_v,c_v+\ell_v]$
and the two child atoms $c_{v0}=c_v-h_v$, $c_{v1}=c_v+h_v$.
Install guards of the form \eqref{eq:guard-bands} about each child using Lemma \ref{lem:guards}, of radii $a_{vi}<h_v/4$ and
ratios $k_{vi}\in(0,1)$. All five new permanent bands (one central, four guards of children) are disjoint,
miss every other support component, and lie in the middle half of
the pocket $N_v$, after decreasing the scales as needed. The child corridor of the atom $c_{vi}$ is
$P_{vi}=(c_{vi}-k_{vi}a_{vi},c_{vi}+k_{vi}a_{vi}), i=0,1.$
At the end of this generation choose a pocket $N_{vi}$ of $c_{vi}$ with radius less than
$k_{vi}a_{vi}/2$, and shrink it further if necessary to satisfy
\eqref{eq:pocket-geometry}.

There are $M_n=3\cdot2^n$ operations in $n$th generation:
one atom splitting and two guard insertions per parent $v$ with $|v|=n$. Before shrinking a
split scale, fix $\lambda_v=\ell_v/h_v\in(0,1/4)$ so small that 
 $\sqrt{1-\lambda_v^2}\ge e^{-\eps_n/4}.$
Fix each guard ratio such that $k_{vi}\ge e^{-\eps_n/4}$ and $k_{vi}<1$.
At the time of a split, shrink $h_v$ if necessary so that the current analytic
factor in \eqref{eq:local-H} satisfies
\begin{align}\label{eq:H-Factor}
 e^{-\eps_n/4}\leq
 \frac{H(c_v\pm h_v)}{H(c_v)}\le e^{\eps_n/4}.
\end{align}
In addition, for every  multiplier $\mathcal{Q}$ associated to each elementary operation, require
\begin{equation}\label{eq:external-atom-budget}
 |\log \mathcal{Q}|\leq\frac{\eps_n}{12\cdot2^n}
\end{equation}
at every current active atom $v$.
The other atoms form a finite set separated
from the target, so Lemma~\ref{lem:continuity} makes these requirements
feasible. Along a lineage, external operations contribute a total
logarithmic error at most $\eps_n/4$. By \eqref{eq:child-masses},\eqref{eq:H-Factor}, the factor
$\sqrt{1-\lambda_v^2}\geq e^{-\epsilon_n/4}$, the child's own guard factor $k_{vi}\geq e^{-\epsilon_n/4}$, and
the external factors together prove \eqref{eq:mass-invariant}.
This counts changes to a parent before it is split and changes to its
children after their birth.

For each elementary operation impose a change at most $\delta_n/M_n$
in $d_{KR}$ and in the mass of every previously selected pocket.
On each previously selected $D_J$, impose
\begin{equation}\label{eq:elementary-analytic}
 \sup_{D_J}|\mathcal{Q}-1|\le\frac{\delta_n}{2M_n}.
\end{equation}
There are finitely many constraints at each operation. The pocket
boundaries have support-free collars: at birth the only support in
the pocket is its central atom, and every subsequent modification
is confined to its middle half or to smaller descendant pockets.
Consequently all old pockets are continuity sets. Protected neighborhoods
are separated from every current target. Lemma~\ref{lem:continuity}
therefore allows all the constraints, together with
\eqref{eq:external-atom-budget}, to be met by shrinking the scales.
Summing the changes gives \eqref{eq:weak-invariant} and
\eqref{eq:cylinder-invariant}. For the product of the $M_n$ multipliers,
\[
 \sup_{D_J}\left|\prod_{M_n} \mathcal{Q}-1\right|
 \le\left(1+\frac{\delta_n}{2M_n}\right)^{M_n}-1
 \le e^{\delta_n/2}-1\le\delta_n,
\]
which proves \eqref{eq:analytic-invariant}.

At a child's guard insertion, apply Lemma~\ref{lem:screening} to the
current union of permanent bands. The ratio $k_{vi}<1$ is already
fixed, but $a_{vi}$ can still be arbitrarily small. Thus it can also
be chosen so that the Green function is at most $\gamma_{n+1}$
throughout the corridor $P_{vi}$. Additional bands only decrease that Green function,
so \eqref{eq:green-invariant} holds at the end of the generation.
The isolated atoms are treated as in Remark~\ref{rem:polar}.

Finally choose all child pockets, and then choose the protected
neighborhoods of the newly created bands. This is done only after all the operations for a
generation have been completed. The finitely many disjoint compact
components have positive pairwise distances, so these choices are
possible. Future modifications are confined to the child pockets.
The finite endpoint-divisor invariance follows from the discussions in 
Section~\ref{subsec:divisors}. This completes the induction.
\end{proof}

\begin{remark}\label{rem:quantifiers}
The ratios $\lambda_v$ and $k_{vi}$ are chosen before the corresponding
absolute scales. Those scales are then decreased to meet finitely many
conditions. In particular, the Green function estimate requirement is imposed with
$k_{vi}<1$ fixed. No later step demands a change to a previously chosen
permanent band width.
\end{remark}

\section{The limiting measure}\label{sec:limit}
For the binary tree $\mathcal{T}$, let $v\in \mathcal{T}$ denote a vertex and let $N_v$ be the associated pocket.
Define
\begin{equation}\label{eq:limit-sets}
 A_n=\bigcup_{|v|=n}N_v,\qquad
 K=\bigcap_{n\ge0}A_n,\qquad
 \mathsf{E}=\overline{\bigcup_{n\ge0}B_n}.
\end{equation}
The sets $A_n$ are decreasing and compact, and
\begin{equation}\label{eq:null-set}
 |A_n|\le2^{-n},\qquad |K|=0.
\end{equation}
The disjoint children and vanishing pocket diameters show that $K$
is a Cantor set. Every permanent band is separated from $K$.

\begin{lemma}\label{lem:limit-support}
The probabilities $\mu_n$ converge weakly to a probability $\mu$
supported on $\mathsf{E}$, and
\begin{equation}\label{eq:E-description}
 \mathsf{E}=K\cup\bigcup_{n\ge0}B_n.
\end{equation}
Every pocket boundary has an open neighborhood disjoint from $\mathsf{E}$.
\end{lemma}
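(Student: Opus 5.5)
Three things must be shown: weak convergence of $\mu_n$, the decomposition \eqref{eq:E-description} of $\mathsf{E}$, and the collar property at pocket boundaries. The plan is to get convergence from the summable bound \eqref{eq:weak-invariant}, and the rest from the nested pocket geometry \eqref{eq:pocket-geometry} together with the fact that every modification at generation $m$ happens inside the middle half of a pocket $N_v$ with $|v|=m$.

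\emph{Convergence.} By \eqref{eq:weak-invariant} and \eqref{eq:budget-sums}, $\sum_n d_{KR}(\mu_{n+1},\mu_n)\le\sum_n\delta_n<\infty$, so $(\mu_n)$ is Cauchy for $d_{KR}$. The space of probability measures on $[-1,1]$ is compact, hence complete, in this metric, which induces the weak topology. Therefore $\mu_n\to\mu$ weakly for some probability measure $\mu$ on $[-1,1]$.

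\emph{Structure of $\mathsf{E}$.}
\begin{enumerate}
\item Fix $n$. By Proposition~\ref{prop:induction}(i) and (ii), every band created at a generation $m\ge n$ lies inside some pocket $N_v$ with $|v|=m$. Since the pockets are nested, it lies in $A_n$. Hence
\[
\bigcup_{m}B_m\subset B_{n}\cup A_n .
\]
\item $B_n$ is a finite union of compact intervals, so $B_n\cup A_n$ is closed. It follows that $\mathsf{E}\subset B_n\cup A_n$ for every $n$.
\item Take $x\in\mathsf{E}\setminus\bigcup_m B_m$. Then $x\in A_n$ for every $n$, so $x\in K$. This gives $\mathsf{E}\subset K\cup\bigcup_n B_n$.
\item For the reverse inclusion, take $x\in K$ and $n\ge0$. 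Let $N_v$, $|v|=n$, be the pocket containing $x$. The central band $J_v$ created at the split of $c_v$ lies in $N_v$, and $|N_v|\le 2^{-2n}\to0$. So $x$ is a limit of points of bands, and $K\subset\mathsf{E}$.
\item For $\supp\mu\subset\mathsf{E}$: every $\mu_m$ with $m\ge n$ is supported in $B_n\cup A_n$, a closed set. Weak limits preserve closed supports in the limit (portmanteau), so $\supp\mu\subset\bigcap_n(B_n\cup A_n)=\mathsf{E}$.
\end{enumerate}

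\emph{Collar at pocket boundaries.} This is the main point to check carefully. Let $y$ be an endpoint of $N_v$ with $|v|=n$.
\begin{enumerate}
\item At birth, $N_v$ contains only the atom $c_v$. Every later modification inside $N_v$ lies in the middle half of $N_v$, or in descendant pockets, which by \eqref{eq:pocket-geometry} lie in the interior of that middle half. So points of $\mathsf{E}\cap N_v$ stay at distance at least $r_v/2$ from $y$.
\item Outside $N_v$, bands existing at generation $n$ are finitely many compact sets disjoint from $N_v$, so they keep positive distance from $y$. Later bands all lie in other pockets $N_w$ with $|w|=n$, $w\neq v$. These are disjoint compact sets by \eqref{eq:pocket-geometry} and the choice of pockets, hence at positive distance from $N_v$.
\item Combining the cases, $y$ has a neighborhood meeting none of $B_n$, $A_n\setminus N_v$, or the inner region of $N_v$, and therefore disjoint from $\mathsf{E}\subset B_n\cup A_n$.
\end{enumerate}

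The step I expect to be the main obstacle is the pocket-disjointness input in the collar argument. Proposition~\ref{prop:induction}(ii) states disjointness only for siblings. Disjointness of distinct pockets of the same generation then follows by induction through their first differing ancestor, since siblings' pockets are disjoint and descendants are nested inside them.
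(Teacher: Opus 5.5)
Your proposal is correct and follows essentially the paper's argument: $d_{KR}$-Cauchy plus compactness for convergence, bands born at depth $\ge n$ lying in $A_n$ for $\mathsf{E}\subset K\cup\bigcup_n B_n$, central bands in shrinking nested pockets for $K\subset\mathsf{E}$, and the middle-half restriction plus separation of same-depth pockets for the collar. The paper gets $\supp\mu\subset\mathsf{E}$ a bit more directly, since each active atom becomes the center of a permanent band and so every $\mu_m$ is carried by the closed set $\mathsf{E}$. One small correction: child pockets lying in the middle half of $N_v$ does not follow from \eqref{eq:pocket-geometry}, which only gives $N_{v0}\cup N_{v1}\Subset\Int N_v$; it follows from the construction ($N_{vi}\subset P_{vi}$, which sits between guards in the middle half), and compact containment in $\Int N_v$ would already suffice for the collar.
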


\begin{proof}
The summable bound \eqref{eq:weak-invariant} makes $\mu_n$ Cauchy in
$d_{KR}$. The space of probabilities on $[-1,1]$ is compact in the
weak topology, so the limit exists and is unique. Each active point
is contained in a permanent band at the next generation. Thus
every finite-stage support lies in $\mathsf{E}$, and $\supp\mu\subset \mathsf{E}$.

At each fixed depth, only finitely many earlier bands lie outside
$A_n$; all bands created by subsequent operations lie inside $A_n$.
If a point of $\mathsf{E}$ lies outside $K$, it therefore belongs to one of
those finitely many permanent bands. Conversely, an infinite branch
has a unique point in its nested pockets, approached by the centers
of its retired bands. That point belongs to $\mathsf{E}$. This proves
\eqref{eq:E-description}. The middle-half restriction, together with
the separation of the finitely many pockets at each depth, preserves
an empty collar around every pocket boundary, also after taking the
closure that defines $\mathsf{E}$.
\end{proof}
The following two propositions show that the set $K$ only supports the singular continuous component of the limiting measure $\mu.$
\begin{proposition}\label{prop:singular-mass}
The limiting measure satisfies
\begin{equation}\label{eq:positive-singular}
 \mu(K)\ge q e^{-S}=\tfrac12e^{-1/32}>0.
\end{equation}
\end{proposition}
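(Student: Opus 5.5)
The plan is to bound $\mu(A_n)$ from below uniformly in $n$ and then pass to the limit $n\to\infty$ using the nested compact sets $A_n\downarrow K$. The first step is a lower bound on the atoms alone at each finite stage. The root atom has mass $m_\varnothing=q$. Iterating \eqref{eq:mass-invariant} along a lineage, each atom $c_v$ with $|v|=n$ satisfies
\[
 m_v\ge 2^{-n}q\,e^{-\sum_{j<n}\eps_j}\ge 2^{-n}q e^{-S}.
\]
Here one must check that \eqref{eq:mass-invariant} refers to the masses at the end of each generation and that these are the masses that enter the next generation. That is exactly how the invariant is phrased, since external operations were already absorbed into the $\eps_n$ budget. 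Summing over the $2^n$ vertices gives
\[
 \mu_n(A_n)\ge\sum_{|v|=n}m_v\ge qe^{-S},
\]
because $c_v\in N_v$ and the pockets at depth $n$ are disjoint.

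The second step transfers this bound from $\mu_n$ to the limit $\mu$. Fix $n$ and let $m\ge n$. Using \eqref{eq:cylinder-invariant} for each pocket $N_w$ with $|w|=n$ and summing over the $2^n$ such pockets gives
\[
 |\mu_{m}(A_n)-\mu_n(A_n)|\le 2^n\sum_{j\ge n}\delta_j=2^n\frac{4^{-n}}{12}=\frac{2^{-n}}{12}.
\]
This is where the choice $\delta_n=4^{-n-2}$ is needed: it beats the factor $2^n$ coming from the number of pockets. Next, by Lemma~\ref{lem:limit-support}, every pocket boundary has a neighborhood disjoint from $\mathsf{E}\supset\supp\mu_m$, so each $N_w$ is a $\mu$-continuity set and weak convergence gives $\mu_m(A_n)\to\mu(A_n)$. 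Hence
\[
 \mu(A_n)\ge qe^{-S}-\frac{2^{-n}}{12}.
\]

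The final step is continuity from above. Since $A_n$ decreases to $K$, we get $\mu(K)=\lim_n\mu(A_n)\ge qe^{-S}$, and inserting $q=1/2$ and $S=1/32$ from \eqref{eq:budget-sums} gives \eqref{eq:positive-singular}.

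The main obstacle is the bookkeeping in the first step: the mass invariant must really compound multiplicatively across generations, so that a single lineage accumulates total error $\sum\eps_j=S$ and not something growing with $n$. The second step also needs the uniform-in-$m$ cylinder control, because weak convergence alone gives no rate. I would therefore first verify that \eqref{eq:cylinder-invariant} applies to all earlier pockets $|w|\le m$ at every later transition $m\to m+1$, which is what the induction imposes.
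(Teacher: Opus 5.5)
Your argument is correct. Its outer structure matches the paper's: iterate \eqref{eq:mass-invariant} to get $\sum_{|v|=n}m_v\ge qe^{-S}$, bound $\mu(A_n)$ from below, then use continuity from above. The difference lies in how you pass from $\mu_n$ to $\mu$.

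The paper does not use \eqref{eq:cylinder-invariant} here at all. It notes that for every $j\ge n$ the depth-$j$ atoms lie in $A_n$, because the pockets are nested. Hence $\mu_j(A_n)\ge\sum_{|v|=j}m_v\ge qe^{-S}$ for all $j\ge n$. Since $A_n$ is closed, the Portmanteau inequality $\mu(A_n)\ge\limsup_j\mu_j(A_n)$ concludes, with no rate and no continuity-set hypothesis.

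You apply the atom count only at the single time $n$ and then propagate it with the summed cylinder estimates. That costs you three extra inputs: the disjointness of the depth-$n$ pockets, the collar property of Lemma~\ref{lem:limit-support}, and the fact that $2^n\sum_{j\ge n}\delta_j\to0$. Your remark that weak convergence alone gives no rate is true but avoidable: counting atoms at every later generation removes the need for one. Two small points within your route:
\begin{itemize}
\item For the lower bound alone, the closed-set Portmanteau inequality applied to $\mu_m(A_n)\ge qe^{-S}-2^{-n}/12$ would replace the continuity-set argument.
\item The continuity-set property you need is $\mu(\partial N_w)=0$. This comes from $\supp\mu\subset\mathsf{E}$, not from $\supp\mu_m\subset\mathsf{E}$.
\end{itemize}

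What your version buys is the two-sided quantitative bound $|\mu(A_n)-\mu_n(A_n)|\le 2^{-n}/12$. This is the same mechanism the paper uses for the upper estimate in Proposition~\ref{prop:no-atoms-K}. For the lower bound alone, the paper's argument is shorter and does not depend on the size of the cylinder errors.
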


\begin{proof}
Repeated use of \eqref{eq:mass-invariant} gives
\begin{equation}\label{eq:tree-mass-bounds}
 m_v\le q e^S2^{-|v|},\qquad
 \sum_{|v|=n}m_v\ge q e^{-S}=:c_*.
\end{equation}
For a fixed $n$ and every $j\ge n$, all generation-$j$ active atoms lie in
$A_n$. Hence $\mu_j(A_n)\ge c_*$. As $A_n$ is closed, the Portmanteau theorem (the ``$\limsup$ boun'' case)
gives
\[
 \mu(A_n)\ge\limsup_{j\to\infty}\mu_j(A_n)\ge c_*.
\]
Continuity from above now gives $\mu(K)\ge c_*$. The passage to the
limit uses fixed closed cylinders containing all future descendants,
rather than the changing active supports themselves.
\end{proof}

\begin{proposition}\label{prop:no-atoms-K}
For every  pocket $N_v$ of $v, |v|=n$,
\begin{equation}\label{eq:upper-cylinder}
 \mu(N_v)\le q e^S2^{-n}+\frac{4^{-n}}{12}.
\end{equation}
In particular, $\mu$ has no atoms on $K$.
\end{proposition}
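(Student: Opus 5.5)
The plan is to bound $\mu_n(N_v)$ at the birth generation, propagate that bound to all later $\mu_j$ using the cylinder invariant \eqref{eq:cylinder-invariant}, and then pass to the weak limit.

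\emph{Step 1: the mass of $N_v$ at birth.} At the end of generation $n$, Proposition~\ref{prop:induction}(ii) says that $N_v$ contains only the atom $c_v$. Hence $\mu_n(N_v)=m_v$. Iterating \eqref{eq:mass-invariant} along the lineage of $v$ gives
\[
 m_v\le q e^{\sum_{j<n}\eps_j}2^{-n}\le q e^S 2^{-n},
\]
which is \eqref{eq:tree-mass-bounds}.

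\emph{Step 2: later generations.} For every $j\ge n$ we have $|v|=n\le j$, so \eqref{eq:cylinder-invariant} applies to the transition $j\to j+1$ and gives $|\mu_{j+1}(N_v)-\mu_j(N_v)|\le\delta_j$. Summing with \eqref{eq:budget-sums} yields
\[
 \mu_j(N_v)\le qe^S2^{-n}+\sum_{i\ge n}\delta_i=qe^S2^{-n}+\frac{4^{-n}}{12}
\]
uniformly in $j\ge n$.

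\emph{Step 3: passage to the limit.} This is the step needing care, since $N_v$ is closed and the Portmanteau inequality goes the wrong way for an upper bound. I would use Lemma~\ref{lem:limit-support}: every pocket boundary has an open neighborhood disjoint from $\mathsf E$, and hence disjoint from $\supp\mu$. Therefore $\mu(\partial N_v)=0$, so $N_v$ is a $\mu$-continuity set and $\mu(N_v)=\lim_j\mu_j(N_v)$. This gives \eqref{eq:upper-cylinder}.

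Alternatively, I would enlarge $N_v$ to a slightly larger open interval $U$ whose closure stays inside the empty collar. Then $\mu(N_v)\le\mu(U)\le\liminf_j\mu_j(U)$, and $\mu_j(U)=\mu_j(N_v)$ for all $j\ge n$ because no finite-stage support meets the collar. This avoids invoking continuity sets directly.

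\emph{Step 4: no atoms on $K$.} Any $x\in K$ lies in a nested chain $N_{v_n}$ with $|v_n|=n$. Therefore $\mu(\{x\})\le\mu(N_{v_n})$, and by \eqref{eq:upper-cylinder} the right side is at most $qe^S2^{-n}+4^{-n}/12$, which tends to $0$ as $n\to\infty$.
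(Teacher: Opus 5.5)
Your proposal is correct and follows the paper's proof essentially step for step. The steps are the birth identity $\mu_n(N_v)=m_v$ together with \eqref{eq:tree-mass-bounds}, summation of \eqref{eq:cylinder-invariant} over $j\ge n$, and passage to the weak limit because $\partial N_v$ lies in the empty collar of Lemma~\ref{lem:limit-support}, which makes $N_v$ a $\mu$-continuity set. The nested-pocket argument then excludes atoms on $K$. Your alternative route, enlarging $N_v$ to an open interval inside the collar, is an equally valid way to obtain the upper bound from the $\liminf$ form of the Portmanteau theorem.
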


\begin{proof}
At the birth of $N_v$, its only mass is its active atom, so
$\mu_n(N_v)=m_v$. Sum \eqref{eq:cylinder-invariant} over all later
generations. The boundary of $N_v$ lies outside $\mathsf{E}$ by
Lemma~\ref{lem:limit-support}, so weak convergence passes its mass.
Together with \eqref{eq:budget-sums} and \eqref{eq:tree-mass-bounds},
this gives \eqref{eq:upper-cylinder}. Each $x\in K$ belongs to a pocket
at every generation, and the bound tends uniformly to zero.
\end{proof}

\begin{proposition}\label{prop:bands}
On every permanent band, $\mu$ is absolutely continuous and has no
endpoint atoms. On its interior, the upper boundary value $\mathcal{C}_\mu(x+i0)$
is finite and purely imaginary with positive imaginary part.
\end{proposition}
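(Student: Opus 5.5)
Fix a permanent band $J$ created at generation $n_0$, so $J\subset B_{n_0}$, and let $D_J$ be its protected complex neighborhood from Proposition~\ref{prop:induction}(iii). The plan is to express $\mathcal C_\mu$ on $D_J$ as $R_{n_0}$ times a uniformly convergent analytic product, and read off all the claims from that representation.

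\emph{Step 1: representation on $D_J$.} For $n\ge n_0$, by \eqref{eq:analytic-invariant} the quotients $Q_n:=R_{n+1}/R_n$ are analytic on $D_J$ and satisfy $\sup_{D_J}|Q_n-1|\le\delta_n$. Since $\sum\delta_n<\infty$, the product $\Phi:=\prod_{n\ge n_0}Q_n$ converges uniformly on $D_J$ to an analytic function. It obeys $e^{-2\sum_{j\ge n_0}\delta_j}\le|\Phi|\le e^{\sum_{j\ge n_0}\delta_j}$, so it is zero-free. Hence $R_n\to R_{n_0}\Phi$ uniformly on $D_J$.

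Off the real line, weak convergence $\mu_n\to\mu$ (Lemma~\ref{lem:limit-support}) gives $R_n=\mathcal C_{\mu_n}\to\mathcal C_\mu$ pointwise. It follows that $\mathcal C_\mu=R_{n_0}\Phi$ on $D_J\setminus\R$. Two properties of the factors will be used:
\begin{itemize}
\item $\Phi$ is real on $D_J\cap\R$, because each multiplier is conjugation symmetric and analytic across $J$;
\item $\Phi>0$ there, because each $Q_n$ is close to $1$.
\end{itemize}

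\emph{Step 2: boundary values and the density.} On the interior of $J$, the function $R_{n_0}$ has an upper boundary value that is purely imaginary with positive imaginary part. This is the finite-stage density $\pi w_{n_0}>0$ from Lemmas~\ref{lem:splitting} and \ref{lem:guards}, and its positivity holds after every operation. Multiplying by the positive real continuous $\Phi$ shows that $\mathcal C_\mu(x+i0)$ exists for $x\in\Int J$, is finite, and is purely imaginary with positive imaginary part. To identify $\mu|_J$, I will use Stieltjes inversion in the form $\mu(I)=\lim_{\epsilon\downarrow0}\frac1\pi\int_I\operatorname{Im}\mathcal C_\mu(x+i\epsilon)\,dx$ for intervals $I$ whose endpoints are not atoms. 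Near the endpoints of $J$, $R_{n_0}$ has at worst an inverse square root singularity (the endpoint-divisor invariance of Section~\ref{subsec:divisors}). Since $\Phi$ is bounded on $D_J$, $|\mathcal C_\mu(x+i\epsilon)|\le C|x-e|^{-1/2}$ uniformly in small $\epsilon$ near an endpoint $e$. Dominated convergence then gives $\mu|_{\Int J}=\frac1\pi\operatorname{Im}\mathcal C_\mu(x+i0)\,dx$.

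\emph{Step 3: no endpoint atoms (main obstacle).} For an endpoint $e$ of $J$, the plan is to use
$$\mu(\{e\})=\lim_{\epsilon\downarrow0}(-i\epsilon)\,\mathcal C_\mu(e+i\epsilon).$$
By Step 1 and the uniform bound on $\Phi$, $|\mathcal C_\mu(e+i\epsilon)|\le C\epsilon^{-1/2}$, so the limit is $0$. The delicate point is that $D_J$ must contain a full complex neighborhood of $e$, not merely of the interior of $J$. It must also contain the adjacent gap segments, on which $R_{n_0}$ is real and $\mu$ should have no mass.

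By Proposition~\ref{prop:induction}(iii), $D_J$ contains $J$ in its interior and is disjoint from all other support components and from the active pockets at generation $n_0$. All later modifications lie in those pockets, so $D_J\cap\R$ stays free of every other band. On $(D_J\cap\R)\setminus J$, $\mathcal C_\mu=R_{n_0}\Phi$ is real-analytic. Hence $\mu$ gives no mass there, again by inversion. Combining this with Step 2, $\mu|_{D_J\cap\R}$ is absolutely continuous and carried by $J$, with no endpoint atoms.

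I expect the hardest part to be justifying the interchange of limits: passing from pointwise convergence off $\R$ to boundary values, uniformly up to the endpoints. I would handle this entirely through the uniform convergence of $\Phi$ on the compact neighborhood $D_J$ rather than through any weak-convergence argument on the real line.
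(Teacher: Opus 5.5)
Your proposal is correct and follows essentially the same route as the paper. The paper also uses uniform convergence on $D_J$ of the product of local multipliers, the factorization $\mathcal{C}_\mu=R_{n_0}\Phi$ off $J$, Stieltjes inversion on the interior, and the $O(|z-e|^{-1/2})$ growth bound to force $\lim_{y\downarrow0}y\operatorname{Im}\mathcal{C}_\mu(e+iy)=0$ at the endpoints; your dominated-convergence justification near the endpoints and your check of the adjacent gap segments just make explicit what the paper leaves implicit.
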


\begin{proof}
Fix a band $J$ and a generation $b$ at whose end it is present, together
with its protected neighborhood $D_J$. Write $T_n=R_{n+1}/R_n$
for the explicit product of local factors. Each $T_n$, $n\ge b$,
is analytic and nonvanishing near $D_J$, and
\eqref{eq:analytic-invariant} implies uniform convergence of
\begin{equation}\label{eq:analytic-product}
 P_J(z)=\prod_{n\ge b}T_n(z)
\end{equation}
on $D_J$ to a nonzero function analytic in its interior. The factors
are positive on $J$, so $P_J(x)>0$ there. Weak convergence of the
measures gives, locally off $J$,
\begin{equation}\label{eq:local-limit-factorization}
 \mathcal{C}_\mu(z)=R_b(z)P_J(z).
\end{equation}
In the interior of $J$, the birth-generation upper boundary value of $R_b(z)$ is
positive imaginary and locally analytic along the interval.
Equation~\eqref{eq:local-limit-factorization} and Stieltjes inversion
give the asserted positive density and exclude any singular measure
in the interior.

At an endpoint $e$ of $J$, the birth-generation transform $\mathcal{C}_\mu(z)$ has growth at
most $O(|z-e|^{-1/2})$. The analytic factor preserves this bound.
For any finite measure,
\[
 \mu\{e\}=\lim_{y\downarrow0}y\operatorname{Im}\mathcal{C}_\mu(e+iy),
\]
as follows directly from dominated convergence applied to
$y^2/((t-e)^2+y^2)$. The growth bound makes this limit zero.
\end{proof}
We are now ready to prove the statements (i), (iii) and (iv) of Theorem \ref{thm:main}. The proof of statement (ii) will be left to next section.
\begin{corollary}\label{cor:measure-conclusions}
The limiting measure is atomless, reflectionless on $\mathsf{E}$, and
satisfies \eqref{eq:decomposition}. Both terms in that decomposition
are nonzero and $\supp\mu=\mathsf{E}$.
\end{corollary}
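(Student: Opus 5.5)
The plan is to assemble the corollary from Lemma~\ref{lem:limit-support}, Propositions~\ref{prop:singular-mass}, \ref{prop:no-atoms-K} and \ref{prop:bands}, and the decomposition \eqref{eq:E-description} $\mathsf{E}=K\cup\bigcup_n B_n$. Write $\mathcal B=\bigcup_n B_n$, a countable union of permanent bands. Since $\supp\mu\subset\mathsf{E}$ and $\mathsf{E}$ is the disjoint union of $K$ and $\mathcal B$ (every permanent band is separated from $K$), we get $\mu=\mu|_K+\mu|_{\mathcal B}$.

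\emph{Atoms and the decomposition.} By Proposition~\ref{prop:bands}, $\mu|_J$ is absolutely continuous for each band $J$, so $\mu|_{\mathcal B}=w\,dx$ with $w=\frac1\pi\operatorname{Im}\mathcal C_\mu(x+i0)$ on band interiors. Proposition~\ref{prop:no-atoms-K} shows there are no atoms on $K$. Together these show $\mu$ is atomless. Since $|K|=0$ by \eqref{eq:null-set}, $\mu|_K$ is singular, and being atomless it is singular continuous; this gives \eqref{eq:decomposition}. Proposition~\ref{prop:singular-mass} gives $\scm(K)\ge \tfrac12e^{-1/32}>0$. For the a.c. part, I will use that the root guards $[-1,-q]\cup[q,1]$ are permanent bands, so Proposition~\ref{prop:bands} gives $w>0$ on their interiors. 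Hence the a.c. part is nonzero.

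\emph{Support.} Each band $J$ has $w>0$ on its interior, so $J\subset\supp\mu$. Because $\supp\mu$ is closed, it contains $\overline{\mathcal B}=\mathsf{E}$; combined with $\supp\mu\subset\mathsf E$ from Lemma~\ref{lem:limit-support}, this gives equality. One could instead argue via Proposition~\ref{prop:no-atoms-K} and positivity of cylinder masses, but density of bands near $K$ is simpler: every point of $K$ is a limit of band centers $c_v$.

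\emph{Reflectionless.} At Lebesgue-a.e. $x\in\mathsf E$ I need $\mathcal C_\mu(x+i0)$ to be finite and purely imaginary. Since $|K|=0$ and band endpoints are countable, a.e. $x\in\mathsf E$ lies in the interior of some permanent band, where Proposition~\ref{prop:bands} gives exactly this. The only delicate point is that reflectionlessness is tested only Lebesgue-a.e., so $K$ is harmless. I expect no genuine obstacle here. The one step to check carefully is that the boundary value in Proposition~\ref{prop:bands} is the nontangential/vertical limit used in the definition. This follows from the local factorization \eqref{eq:local-limit-factorization}, since $R_b$ and $P_J$ extend continuously to the interior of $J$ from above.
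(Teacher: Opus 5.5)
Your proposal is correct and follows the paper's proof essentially step for step. It combines $\mathsf{E}=K\cup\bigcup_n B_n$ with Propositions~\ref{prop:singular-mass}, \ref{prop:no-atoms-K} and \ref{prop:bands}, uses $|K|=0$ to reduce reflectionlessness to band interiors, uses the root guards to get a nonzero absolutely continuous part, and uses density of the bands in $\mathsf{E}$ to get $\supp\mu=\mathsf{E}$; you are simply more explicit than the paper about the bookkeeping (disjointness of $K$ from the bands, countability of band endpoints, and the sense of the boundary value).
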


\begin{proof}
Combine \eqref{eq:E-description}, Proposition~\ref{prop:no-atoms-K},
and Proposition~\ref{prop:bands}. Since $|K|=0$,
reflectionlessness on the interiors of bands is sufficient and this follows from that the upper boundary values are all positive imaginary.
The only singular mass is $\mu|_K$, which is nonzero by
Proposition~\ref{prop:singular-mass} and is atomless. The initial two
bands retain positive density, so the absolutely continuous part is
nonzero. All permanent bands belong to $\supp\mu$ and their closure is $\mathsf{E}$.
\end{proof}

\section{Regularity and Widom summability}\label{sec:widom}

For a nonroot vertex $v$ of length $n$, subordinate principle of Green function and
\eqref{eq:green-invariant} imply
\begin{equation}\label{eq:limit-green-bound}
 G_\mathsf{E}(x)\le\gamma_n\qquad(x\in P_v\setminus \mathsf{E}).
\end{equation}
At this point no extension by zero across unknown irregular points
is being used. The root corridor has a finite majorant
\begin{equation}\label{eq:root-G}
 B:=\max_{[-q,q]}G_{B_0}<\infty.
\end{equation}

\begin{lemma}\label{lem:regularity}
Every point of $\mathsf{E}$ is Dirichlet regular.
\end{lemma}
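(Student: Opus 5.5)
The plan splits $\mathsf{E}$ into its two kinds of points, using the description \eqref{eq:E-description}: points of the permanent bands, and points of $K$. For a point lying in a permanent band $J$, regularity is classical. $J$ is a nondegenerate continuum contained in $\mathsf{E}$. Every point of a nondegenerate connected compact set is regular for the Dirichlet problem of the complement, and regularity is monotone in the set (subordination: $G_{\mathsf{E}}\le G_J$ off $\mathsf{E}$, and $G_J(z)\to 0$ as $z\to x\in J$). So we get $\lim_{z\to x}G_{\mathsf{E}}(z)=0$ for every $x\in\bigcup_n B_n$, and all the work is at points of $K$.

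Fix $x\in K$. For every $n$ there is a vertex $v$ with $|v|=n$ and $x\in N_v\Subset P_v$, since pockets are nested and lie inside corridors. The goal is $\limsup_{z\to x,\,z\notin \mathsf{E}}G_{\mathsf{E}}(z)\le C\gamma_n$ for every $n$, which gives regularity because $\gamma_n\to0$. The bound \eqref{eq:limit-green-bound} controls $G_{\mathsf{E}}$ only on the real segment $P_v\setminus\mathsf{E}$, while regularity needs control on a full complex neighborhood of $x$. To get it, I would return to the comparison used to prove Lemma~\ref{lem:screening}. That proof gives more than \eqref{eq:screening-bound}: the maximum principle on $D(c,r)\setminus T_a$ yields the two–dimensional estimate
\[
 G_{F\cup T_a}(z)\le \frac{M}{L_a}\,G_{T_k}\bigl((z-c)/a\bigr),\qquad z\in D(c,r)\setminus T_a .
\]
Apply this at the guard insertion for the vertex $v$, with $F$ the bands present at that moment and $T_a$ its guard pair. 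On the closed disc $|z-c_v|\le k_va_v$ the right-hand side is at most $(M/L_a)\max_{|\zeta|\le k}G_{T_k}(\zeta)$. This quantity is finite, since $T_k$ is regular and $G_{T_k}$ is continuous on $\C$, and it equals the real maximum $G_{T_k}(0)$: by symmetry and the maximum principle on the disc of radius $k$ it is attained on the real diameter. Hence the same choice of $a_v$ that produced \eqref{eq:green-invariant} gives
\[
 G_{B_n}(z)\le\gamma_n\quad\text{for } |z-c_v|<k_va_v .
\]
Since $\mathsf{E}\supset B_n$, subordination transfers this to $G_{\mathsf{E}}$ on the disc minus $\mathsf{E}$. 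The pocket $N_v$, of radius less than $k_va_v/2$, lies in the open disc $D(c_v,k_va_v)$, which is therefore a complex neighborhood of $x$. Consequently $\limsup_{z\to x}G_{\mathsf{E}}(z)\le\gamma_n$ for all $n$, and $x$ is regular. By Remark~\ref{rem:polar}, the isolated atoms present at finite stages do not interfere: all Green functions here are those of the bands, and $B_n\subset\mathsf{E}$.

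The main obstacle is exactly this passage from the real-line bound \eqref{eq:green-invariant} to a bound on a complex disc. I will handle it as above, by noting that the proof of Lemma~\ref{lem:screening}, and hence the choice of $a_{vi}$ in Proposition~\ref{prop:induction}, already controls $G$ on the whole disc $|z-c|\le ka$. If one prefers to use \eqref{eq:limit-green-bound} as a black box, a fallback is a harmonic-measure argument in the slit disc $D(c_v,k_va_v)\setminus\mathsf{E}$. Its boundary consists of the circle, where $G_{\mathsf{E}}\le G_{B_n}$ is bounded by the same comparison, together with pieces of $\mathsf{E}$ where $G_{\mathsf{E}}$ is at least $\limsup$-controlled. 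I expect this fallback to be more delicate, because irregular points are not yet excluded. For that reason the direct complex version of the screening estimate is the route I would take.
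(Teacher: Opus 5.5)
Your treatment of points in permanent bands is fine and matches the paper. At points of $K$, however, your argument rests on a false step. You claim that $\max_{|\zeta|\le k}G_{T_k}(\zeta)$ equals $G_{T_k}(0)$ because the maximum is attained on the real diameter. But $G_{T_k}$ is harmonic in the open disc $|\zeta|<k$ and continuous on its closure, so the maximum principle places its maximum on the circle $|\zeta|=k$, not on the diameter. Moreover, by the mean value property $G_{T_k}(0)$ is the average of $G_{T_k}$ over that circle. Since $G_{T_k}$ vanishes at $\pm k$, the maximum is strictly larger than $G_{T_k}(0)$; from $G_{T_k}(z)=\tfrac12 G_{[k^2,1]}(z^2)$ one sees it is attained at $\pm ik$. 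Hence the choice of $a_{vi}$ in Proposition~\ref{prop:induction}, which only guarantees $G\le\gamma_{n+1}$ on the real corridor $P_{vi}$, does not give $G_{B_n}\le\gamma_n$ on $D(c_v,k_va_v)$. Your route could be repaired by imposing the stronger requirement $(M/L_a)\max_{|\zeta|\le k}G_{T_k}(\zeta)\le\gamma_{n+1}$ when choosing $a_{vi}$. This is feasible, since that maximum is finite and $L_a\to\infty$. But it modifies the construction instead of using the invariant \eqref{eq:green-invariant} as stated.

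More importantly, the obstacle you identify is not really there. Since $x\in N_v\Subset P_v$ and no band of $B_n$ meets $P_v$, the point $x$ is at positive distance from $B_n$. So $G_{B_n}$ is harmonic, hence continuous, in a complex neighborhood of $x$; this holds also when $x=c_v$ is an active atom, by Remark~\ref{rem:polar}. Subordination then gives
\[
 \limsup_{z\to x,\,z\notin\mathsf{E}}G_{\mathsf{E}}(z)\le\lim_{z\to x}G_{B_n}(z)=G_{B_n}(x)\le\gamma_n .
\]
Only the value at the single real point $x$ is needed, and \eqref{eq:green-invariant} supplies it. Letting $n\to\infty$ finishes the proof. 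This is exactly the paper's argument. It requires no disc estimate, no return to the proof of Lemma~\ref{lem:screening}, and no harmonic-measure fallback.
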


\begin{proof}
A point of a permanent interval is regular by comparison with the
Green function of that interval. If $x\in K$, let $v$ be its
depth-$n$ vertex. The finite union $B_n$ has no band in $P_v$, and
$G_{B_n}$ is harmonic near $x$, including when $x$ is an isolated
active point of the finite-stage support. It is a continuous majorant
of $G_\mathsf{E}$ off $\mathsf{E}$, and $G_{B_n}(x)\le\gamma_n$. Consequently
\[
 \limsup_{\substack{z\to x\\z\notin E}}G_\mathsf{E}(z)\le\gamma_n.
\]
Letting $n\to\infty$ proves regularity. 
\end{proof}

\begin{lemma}\label{lem:gap-count}
Each vertex finalizes four bounded complementary intervals of $\mathsf{E}$,
all contained in its corridor. Every bounded complementary interval
of $\mathsf{E}$ is finalized at exactly one vertex.
\end{lemma}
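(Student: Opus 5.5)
We argue directly from the geometry of the inductive step in Proposition~\ref{prop:induction} and the description \eqref{eq:E-description} of $\mathsf{E}$. Fix a vertex $v$ with $|v|=n$. During the transition $n\to n+1$, the operations at $v$ place five new permanent bands inside the middle half of $N_v\Subset P_v$: the central band $J_v=[c_v-\ell_v,c_v+\ell_v]$, and the guards $L_{vi}=[c_{vi}-a_{vi},c_{vi}-k_{vi}a_{vi}]$ and $Q_{vi}=[c_{vi}+k_{vi}a_{vi},c_{vi}+a_{vi}]$ for $i=0,1$. Ordered along the line, these are $L_{v0},P_{v0},Q_{v0},J_v,L_{v1},P_{v1},Q_{v1}$. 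The four open intervals between consecutive bands not separated by a corridor are
\[
(\text{left end of }P_v,\ \min L_{v0}),\quad (\max Q_{v0},\ c_v-\ell_v),\quad (c_v+\ell_v,\ \min L_{v1}),\quad (\max Q_{v1},\ \text{right end of }P_v).
\]
These are the four gaps finalized by $v$. For the root, the corridor endpoints $\pm q$ are the inner endpoints of the bands of $B_0$. For a nonroot $v$, they are the inner endpoints of $v$'s own guards, which are permanent bands.

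\textbf{Step 1: each of the four intervals misses $\mathsf{E}$.} By \eqref{eq:E-description}, $\mathsf{E}=K\cup\bigcup_m B_m$. No band older than generation $n+1$ meets $P_v$: earlier bands lie outside $N_v$, and only the guards of $v$ border $P_v$. Here I would use item (iii) and the fact that, at birth, the pocket $N_v$ contains only the atom $c_v$, together with the choice $N_v\Subset P_v$ and the confinement of later modifications. The new bands are the five listed. Every later band and every point of $K$ inside $P_v$ lies in $N_{v0}\cup N_{v1}$, and $N_{vi}\Subset P_{vi}$ because its radius is less than $k_{vi}a_{vi}/2$. So the four intervals are disjoint from $\mathsf{E}$. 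Their endpoints are band endpoints, hence in $\mathsf{E}$, so each is a full bounded complementary interval. They are nonempty because the scales in \eqref{eq:local-geometry} and $a_{vi}<h_v/4$ make all five bands disjoint.

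\textbf{Step 2: every bounded gap arises this way.} Let $I$ be a bounded component of $\R\setminus\mathsf{E}$. Its endpoints lie in $\mathsf{E}$. Since $\mathsf{E}\subset[-1,1]$ and $B_0=[-1,-q]\cup[q,1]$, $I$ lies in $(-q,q)=P_\varnothing$. Consider the set of vertices $v$ with $I\subset P_v$; it contains the root.

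This set is a chain: children's corridors are disjoint, and if $I\subset P_v$ then any vertex $w$ with $I\subset P_w$ is comparable with $v$. It is also finite. Otherwise $I$ would lie in $\bigcap_n P_{v|n}$ along a branch. That intersection has diameter tending to zero, since $|P_{vi}|=2k_{vi}a_{vi}<h_v/2\le|N_v|$, which is at most $2^{-2n}$. This is impossible for a nondegenerate interval.

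Let $v$ be the deepest vertex in the chain. Then $I\subset P_v$ but $I\not\subset P_{v0}$ and $I\not\subset P_{v1}$. Since $I$ is connected and misses the guards of $v0$ and $v1$, $I$ also meets neither $P_{v0}$ nor $P_{v1}$. Otherwise it would cross a guard endpoint, which lies in $\mathsf{E}$. Hence $I$ lies in $P_v$ minus the five bands and the two child corridors, that is, in one of the four intervals of Step 1. Since $I$ is a maximal complementary interval, it equals that interval.

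\textbf{Step 3: uniqueness.} The four gaps finalized by $v$ satisfy $I\subset P_v$ and meet neither $P_{v0}$ nor $P_{v1}$. So $v$ is determined as the deepest vertex whose corridor contains $I$, and different vertices finalize disjoint families.

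The main obstacle is Step 1: certifying that no earlier band enters $P_v$, and in particular that the old bands adjacent to $c_v$ end exactly at $\partial P_v$. I would read this off the endpoint-divisor bookkeeping of Section~\ref{subsec:divisors}, which says each child is flanked by its own designated guard pair. Combined with item (iii) and the support-free collars, this gives that the support in $\overline{P_v}$ at birth is $\{c_v\}$ together with the two guard endpoints.
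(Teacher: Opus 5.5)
Your proposal is correct and takes the same route as the paper. You read off the four finalized gaps from the band order $L_{v0}<Q_{v0}<J_v<L_{v1}<Q_{v1}$ inside $P_v$, and you show every gap is finalized because an unfinalized gap would have to sit in nested child corridors of vanishing diameter; your deepest-vertex chain argument is a more explicit version of the paper's existence and uniqueness steps. For the point you flag as the main obstacle, the most direct justification is the requirement in Lemma~\ref{lem:guards} and in the inductive step that $[c_{vi}-a_{vi},c_{vi}+a_{vi}]$ meets no other support when the guards are inserted, a fact the paper itself uses only implicitly.
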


\begin{proof}
Let $L_v,Q_v$ be the parent's guards and $L_{vi},Q_{vi}$ its children's
guards. After refinement the order of bands in the parent corridor is
\begin{equation}\label{eq:band-order}
 L_{v0}<Q_{v0}<J_v<L_{v1}<Q_{v1},
\end{equation}
with the left child atom between $L_{v0},Q_{v0}$ and the right child
atom between $L_{v1},Q_{v1}$. Five new bands divide the corridor into
six gaps. Four are never modified again: those between $L_v,L_{v0}$,
between $Q_{v0},J_v$, between $J_v,L_{v1}$, and between $Q_{v1},Q_v$.
The remaining two are the child corridors. All future operations 
in either child corridor lie inside its pocket; hence they cannot
alter the four finalized gaps. See Figure~\ref{fig:split}.

If a final nonempty open gap were never finalized, it would have to
remain inside a child corridor at every generation. But each child
corridor is contained in its parent's pocket, and the pocket diameters
tend to zero. Such an open interval cannot exist. The disjointness
of the four new gaps from all previously finalized ones gives uniqueness.
\end{proof}

\begin{figure}[tb]
\centering
\begin{tikzpicture}[x=0.85cm,y=0.72cm,font=\small]
 \draw[thin] (0,0)--(12,0);
 \foreach \a/\b in {0/0.8,2/2.6,3.9/4.5,5.6/6.4,7.5/8.1,9.4/10,11.2/12}
   {\draw[line width=3.2pt] (\a,0)--(\b,0);}
 \node[above] at (0.4,0.12) {$L_v$};
 \node[above] at (2.3,0.12) {$L_{v0}$};
 \node[above] at (4.2,0.12) {$Q_{v0}$};
 \node[above] at (6,0.12) {$J_v$};
 \node[above] at (7.8,0.12) {$L_{v1}$};
 \node[above] at (9.7,0.12) {$Q_{v1}$};
 \node[above] at (11.6,0.12) {$Q_v$};
 \fill (3.25,0) circle (2.1pt);
 \fill (8.75,0) circle (2.1pt);
 \node[below] at (3.25,-0.1) {$c_{v0}$};
 \node[below] at (8.75,-0.1) {$c_{v1}$};
 \foreach \x in {1.4,5.05,6.95,10.6}
   {\node[below] at (\x,-0.1) {$*$};}
 \draw[<->] (2.6,-1.05)--(3.9,-1.05);
 \node[below] at (3.25,-1.05) {$P_{v0}$};
 \draw[<->] (8.1,-1.05)--(9.4,-1.05);
 \node[below] at (8.75,-1.05) {$P_{v1}$};
\end{tikzpicture}
\caption{One refinement, not to scale. Thick segments are permanent
bands, dots are active atoms, and stars mark the four finalized gaps.
Only the two child corridors are refined further.}
\label{fig:split}
\end{figure}

We are now ready to prove the statement (ii) of Theorem \ref{thm:main}.

\begin{proposition}\label{prop:widom}
The set $\mathsf{E}$ satisfies  the Widom condition \eqref{eq:widom-definition}.
\end{proposition}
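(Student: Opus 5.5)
We will bound each term of the Widom sum by indexing the bounded gaps through Lemma~\ref{lem:gap-count} and using the Green function bounds \eqref{eq:limit-green-bound} and \eqref{eq:root-G}. By Lemma~\ref{lem:regularity}, $G_\mathsf{E}$ extends continuously by zero to $\mathsf{E}$, so $\max_{x\in I}G_\mathsf{E}(x)$ is well defined for every $I\in\mathcal G(\mathsf{E})$. Lemma~\ref{lem:gap-count} gives a bijection between $\mathcal G(\mathsf{E})$ and the set of pairs $(v,j)$, where $v\in\mathcal T$ and $j\in\{1,2,3,4\}$, with the gap finalized at $v$ contained in the corridor $P_v$.

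First we treat the root gaps. The four gaps finalized at $\varnothing$ lie in $P_\varnothing=(-q,q)$. We compare $G_\mathsf{E}$ with $G_{B_0}$ using the subordinate principle, since $B_0\subset\mathsf{E}$. This gives $G_\mathsf{E}\le G_{B_0}\le B$ on these gaps, by \eqref{eq:root-G}. Their total contribution is therefore at most $4B<\infty$.

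Next take a nonroot vertex $v$ with $|v|=n\ge1$. Each of its four finalized gaps $I$ satisfies $I\subset P_v\setminus\mathsf{E}$. By \eqref{eq:limit-green-bound}, $\max_{\bar I}G_\mathsf{E}\le\gamma_n$; continuity carries the bound to the endpoints, where $G_\mathsf{E}$ vanishes anyway. There are $2^n$ vertices of depth $n$, so
\[
 \sum_{I\in\mathcal G(\mathsf{E})}\max_{I}G_\mathsf{E}\le 4B+4\sum_{n\ge1}2^n\gamma_n
 =4B+4\sum_{n\ge1}4^{-n}<\infty,
\]
using \eqref{eq:budget-sums}.

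The main point to check is that \eqref{eq:limit-green-bound} genuinely covers each finalized gap. The four gaps finalized at $v$ lie between the parent's guards, which is the corridor $P_v$ whose Green function was controlled at generation $n$ through \eqref{eq:green-invariant} and Lemma~\ref{lem:screening}. I would verify this carefully against the band order \eqref{eq:band-order}: the outer gaps lie between $L_v$ and $L_{v0}$ and between $Q_{v1}$ and $Q_v$, so all four gaps lie inside $P_v$. Monotonicity $G_\mathsf{E}\le G_{B_n}$ then holds because $B_n\subset\mathsf{E}$. There is no irregularity issue here, because $B_n$ is a finite union of intervals.
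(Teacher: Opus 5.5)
Your proposal is correct and follows essentially the same route as the paper. You index the bounded gaps by Lemma~\ref{lem:gap-count}, bound the four root gaps by $B$ from \eqref{eq:root-G}, bound the four gaps finalized at each depth-$n$ vertex by $\gamma_n$ via \eqref{eq:limit-green-bound}, and sum to get $4B+4\sum_{n\ge1}2^n\gamma_n<\infty$.
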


\begin{proof}
By Lemma~\ref{lem:regularity}, $G_\mathsf{E}$ has zero boundary values at both
ends of every bounded gap. Each such gap has a positive interior
maximum. The four gaps finalized at the root have maximum of Green function at most $B$.
For $n\ge1$, each of the $2^n$ vertices finalizes four gaps contained
in its corridor, so \eqref{eq:limit-green-bound} bounds their maximum
by $\gamma_n$. Thus
\begin{equation}\label{eq:widom-sum}
 \sum_{I\in\mathcal G(\mathsf{E})}\max_I G_\mathsf{E}
 \le4B+4\sum_{n\ge1}2^n\gamma_n<\infty.
\end{equation}
\end{proof}

The construction remains within the class of products considered
in~\cite{NVY2007}. Every finalized gap $(a_j,b_j)$ has a fixed divisor
$x_j\in\{a_j,b_j\}$ inherited from the finite-stage invariant.

\begin{proposition}\label{prop:infinite-product}
For $z\in\C\setminus[-1,1]$, the limiting transform has the representation
\begin{equation}\label{eq:infinite-product}
 \mathcal{C}_\mu(z)=-\frac1{\sqrt{z^2-1}}
 \prod_{j\ge1}\frac{z-x_j}{\sqrt{(z-a_j)(z-b_j)}}.
\end{equation}
The product converges locally uniformly on that domain, with each
factor normalized to one at infinity. Its analytic continuation
across each bounded gap equals $\mathcal{C}_\mu$ there.
\end{proposition}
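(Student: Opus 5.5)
The idea is to show first that every finite-stage transform $R_n$ already has the form \eqref{eq:infinite-product}, truncated to the finitely many gaps present at stage $n$. Then we pass to the limit $n\to\infty$ and compare with $\mathcal{C}_\mu$.

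\emph{Finite stages.} By Proposition~\ref{prop:induction}(i) and the bookkeeping of Section~\ref{subsec:divisors}, $R_n$ is a canonical product \eqref{eq:finite-product}. Each bounded gap of $\operatorname{supp}\mu_n$ adjacent to a band contributes a factor $(z-x)/\sqrt{(z-a)(z-b)}$, with $x$ an endpoint of that gap. Degenerate bands are active atoms, and they contribute factors of the form $(z-x)/(z-c)$ attached to the gaps flanking them. Starting from \eqref{eq:initial}, we have $R_0=-D_1(z)^{-1}\cdot D_q(z)/z$. The single root gap $(-q,q)$ together with the atom $0$ gives this factor, which rearranges as $-1/\sqrt{z^2-1}$ times gap factors. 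So we can write
\[
 R_n(z)=-\frac{1}{\sqrt{z^2-1}}\prod_{j\in\mathcal F_n}\frac{z-x_j}{\sqrt{(z-a_j)(z-b_j)}}\;\Phi_n(z).
\]
Here $\mathcal F_n$ is the set of gaps finalized by the end of generation $n$ (Lemma~\ref{lem:gap-count}). The factor $\Phi_n$ collects the contributions of the $2^n$ unfinalized child corridors and their atoms. Each such contribution is a product over a corridor $P_v$, $|v|=n$, of the form $(z-x)(z-x')/\bigl((z-c_v)\sqrt{\cdots}\bigr)$, and it is normalized to one at infinity.

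\emph{Convergence.} Let $K'\Subset\C\setminus[-1,1]$ be compact, and let $d=\operatorname{dist}(K',[-1,1])>0$. Each finalized gap factor equals $1+O(|b_j-a_j|/d)$ uniformly on $K'$. Indeed, $(z-x_j)^2/((z-a_j)(z-b_j))=1+O(|b_j-a_j|/d)$, and we take the branch near $1$. By Lemma~\ref{lem:gap-count} all gaps lie in $[-1,1]$ and are disjoint, so $\sum_j|b_j-a_j|\le 2$. Hence the product converges absolutely and uniformly on $K'$. Likewise, every factor of $\Phi_n$ coming from the vertex $v$ is supported in $N_v$, whose diameter is at most $4^{-n}$. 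It is therefore $1+O(4^{-n}/d)$, and there are at most $C2^n$ such factors, so $\Phi_n\to1$ uniformly on $K'$. On the other side, weak convergence $\mu_n\to\mu$ (Lemma~\ref{lem:limit-support}) gives $R_n\to\mathcal{C}_\mu$ pointwise, and in fact locally uniformly, off $[-1,1]$. Combining the two limits yields \eqref{eq:infinite-product}.

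\emph{Continuation across gaps.} Fix a bounded gap $I=(a_j,b_j)$. Choose a compact complex neighborhood $U$ of a compact subinterval of $I$ with $U\cap\mathsf{E}=\varnothing$. The same estimates give uniform convergence on $U$ of the product without the $j$th factor and of $\Phi_n$: every other gap and every pocket of large depth stays at positive distance from $U$, because by Lemma~\ref{lem:limit-support} pocket boundaries have collars disjoint from $\mathsf{E}$. The measures converge weakly and are supported away from $U$, so $R_n\to\mathcal{C}_\mu$ uniformly on $U$ as well. Thus both sides are analytic on $U$, equal off the real axis, and hence equal on $U$. This gives the continuation.

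The main obstacle is the bookkeeping in the first step. One has to verify exactly that the stage-$n$ product splits into finalized gap factors plus a remainder localized in the pockets $N_v$, so that $\Phi_n\to1$. The difficulty is near the real axis in the third step, where the count $C2^n$ against the size $4^{-n}$ must be controlled uniformly for pockets close to $U$. I would handle this by discarding finitely many generations, namely those whose pockets could meet $U$, and then using \eqref{eq:pocket-geometry}.
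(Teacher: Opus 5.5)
Your plan follows the paper's argument. You write $R_n$ in outer-endpoint form, split off the finalized gap factors, show that the factors attached to the current corridors tend to $1$ uniformly at distance $d$ from $[-1,1]$, and combine this with $R_n\to\mathcal{C}_\mu$.

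One step is misstated, though. The factor of $\Phi_n$ attached to $v$ is $f_v(z)=\sqrt{(z-\alpha_v)(z-\beta_v)}/(z-c_v)$, where $(\alpha_v,\beta_v)=P_v$. This factor is not supported in $N_v$. By definition $N_v\Subset P_v$, so the corridor is the larger interval, and $|N_v|\le 4^{-n}$ says nothing about it.

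The correct containment is $P_v\subset N_{v'}$ for the parent $v'$. This holds because all modifications at $v'$, including the guards of its children, lie in the middle half of $N_{v'}$. You therefore get $4^{1-n}$ instead of $4^{-n}$, and since $2^n\cdot 4^{1-n}\to0$, your conclusion survives.

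The paper's route is cleaner. By \eqref{eq:log-factor} each $f_v$ satisfies $|\log f_v|\le |P_v|/(2d)$. The corridors are disjoint, so $\sum_{|v|=n}|P_v|=|U_n|\le|A_{n-1}|\le 2^{1-n}$. The same bound also controls the tail of the infinite product, because every final gap not yet finalized lies in $U_n$ as well.

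Your third step does more than the statement requires. As written, it also skips the point that makes the right-hand side analytic on $U$. Neither $1/\sqrt{z^2-1}$ nor the $j$th factor is analytic across $(a_j,b_j)$. Only their product is, because both square roots flip sign across the gap. The other factors and your uniform convergence on $U$ then do the rest.

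The paper avoids this issue entirely. $\mathcal{C}_\mu$ is analytic on $\C\setminus\mathsf{E}\supset(a_j,b_j)$ and agrees with the product on both half-planes, so $\mathcal{C}_\mu$ is itself the required continuation.
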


\begin{proof}
With branches fixed by normalization at infinity,
\begin{equation}\label{eq:log-factor}
 \log\frac{z-x_j}{\sqrt{(z-a_j)(z-b_j)}}
 =\frac12\int_{a_j}^{x_j}\frac{\dd t}{t-z}
  -\frac12\int_{x_j}^{b_j}\frac{\dd t}{t-z}.
\end{equation}
On any compact set at a distance $d>0$ from $[-1,1]$, the absolute value
is bounded by $(b_j-a_j)/(2d)$. Since the gaps are disjoint and have
total length at most two, the logarithmic series converges absolutely
and locally uniformly. Therefore, the product
\eqref{eq:infinite-product} is well defined.

At finite stage $n$, the canonical product
\eqref{eq:finite-product} can be written in the same outer-endpoint
form, with the current bounded gaps and their divisors. All gaps
outside the union $U_n=\bigcup_{|v|=n}P_v$ have been finalized. The
unresolved current gaps lie in $U_n$, and so do all final gaps not yet
finalized. For $n\ge1$, $U_n\subset A_{n-1}$, whence $|U_n|\to0$.
By \eqref{eq:log-factor}, each unresolved logarithmic tail is bounded
by $|U_n|/(2d)$. The difference between the finite and limiting
logarithmic products is therefore bounded by $|U_n|/d$.
It tends to zero uniformly on the chosen compact set.
Since $R_n\to \mathcal{C}_\mu$ there, \eqref{eq:infinite-product} follows.

Finally, $\mathcal{C}_\mu$ is already analytic on $\C\setminus \mathsf{E}$. The identity
just proved on the upper and lower half-planes identifies it as the
analytic continuation of the product across every finalized gap.
\end{proof}

\section{Acknowledgments}
The author thanks Peter Yuditskii for suggesting the problem and encouraging him to complete their argument. The construction and the decisive finite-stage density formulas originate in the work of Nazarov-Volberg-Yuditskii \cite{NVY2007}.


\end{document}